\let \div \relax
\colorlet{red}{black} %un-comment to remove red highlights
\newtheorem{theorem}{Theorem}[section]
\newtheorem{prop}[theorem]{Proposition}
\newtheorem{defin}[theorem]{Definition}
\newtheorem{lemma}[theorem]{Lemma}
\newtheorem{rem}[theorem]{Remark}
\DeclareMathOperator{\E}{\mathbb{E}}
\DeclareMathOperator{\I}{\mathcal{I}}
\title[Inverse of divergence in randomly perforated domains]
{Inverse of divergence and homogenization of compressible Navier-Stokes equations in randomly perforated domains
% \thanks{}
}
\date{}
\author{Peter Bella \and Florian Oschmann}
\begin{document}
\maketitle

%\pagestyle{fancy}
%\fancyhf{}
%\fancyhead[CO]{P. Bella, F. Oschmann}
%\fancyfoot[C]{\thepage}
%\renewcommand{\headrulewidth}{0pt}

\begin{abstract}
We analyze the behavior of weak solutions to compressible viscous fluid flows in a bounded domain in $\R^3$, randomly perforated by tiny balls with random size. Assuming the radii of the balls scale like $\eps^\alpha$, $\alpha > 3$, with $\eps$ denoting the average distance between the balls, the problem homogenize to the same limiting equation. Our main contribution is a construction of the Bogovski\u{\i} operator, uniformly in $\eps$, without any assumptions on the minimal distance between the balls.
\end{abstract}

\section{Introduction}\label{sec:Introd}
The goal of this paper is to analyze the effective behavior of a compressible viscous fluid in randomly perforated domains. We consider a bounded domain $D \subset \R^3$ which for $\eps > 0$ is perforated by random balls $B_{\eps^{\alpha} r_i}(\eps z_i)$ with $\alpha > 3$,
and show that weak solutions to the Navier-Stokes equations in these perforated domains converge as $\eps \to 0$ to a weak solution to the same equation in $D$. Compared to the previous results, mostly restricted to the periodic arrangement of the holes~\cite{DieningFeireislLu,FeireislLu,Lu20} or at least assuming minimal distance between the holes being $\eps$~\cite{LuSchwarzacher}, we do not require any such assumptions on the minimal distance between the holes. 

As in the previous works on this topic, the key step in the proof of the homogenization result is the construction of the Bogovski\u{\i} operator (inverse of the divergence), bounded \emph{independently} of $\eps$. This operator is then used in a classical way to construct a test function, hence providing uniform estimates on the density and velocity of the fluid. 

\medskip

%\subsection{Known results}\footnote{This part about previous results should be extended (I can do it later)}
The question how small perforations in the domain influence the original equation has a long history. In the case of a perforation by periodically arranged balls in $d$ dimensions, Cioranescu and Murat~\cite{CioranescuMurat82} (see also~\cite{ConcaDonato88,Jing20}) studied effective behavior of the Poisson equation with zero Dirichlet boundary conditions on the balls. Denoting the distance between balls by $\eps$ and assuming the radii scale like $\eps^{\frac{d}{d-2}}$, they identified an additional Brinkman term (``A strange term coming from nowhere'') in the limiting equation. 

Heuristically, the homogeneous boundary conditions on the perforation push the solution $u$ towards $0$, with the strength related to the size of holes. Focusing on the case $d=3$, holes of size $\e^\alpha$, $\alpha > 3$ are too tiny to make any difference. If the balls are larger that $\eps^3$ (hence push $u$ stronger to $0$), solutions converge to $0$ as $\eps \to 0$, and only a rescaling by some negative power of $\eps$ may lead to a reasonable limiting problem. 

For the incompressible stationary Stokes and Navier-Stokes equations with periodic distribution of holes, Allaire \cite{Allaire90a,Allaire90b} gave a full description to all cases $\alpha>1$ and all dimensions $d\geq 2$. More precisely, for $d=3$ if $\alpha\in (1,3)$, which corresponds to the supercritical case of large particles, he obtained Darcy's law; for the critical case $\alpha=3$, an additional friction term occurs and gives rise to Brinkman's law. %Such a ''strange term coming from nowhere'' first arises in \cite{CioranescuMurat}, where the authors considered Poisson's equation. 
The subcritical case $\alpha>3$ corresponding to small particles leads to the same system of Stokes and Navier-Stokes equations. The case $\alpha=1$ was studied in \cite{Allaire89} for the steady incompressible Stokes system. % and in \cite{MasmoudiHom} for the time-dependent compressible Navier-Stokes system. %More recently, Giunti and Höfer showed in \cite{GiuntiHoefer19} that under the assumption of randomly distributed holes with random radii, the randomness does not affect the homogenization procedure.\\

%Such heuristics predicts the right behavior also in the case of effective fluid flows in porous media. In the case of incompressible fluids, modeled either by Stokes or Navier-Stokes equations, Allaire~\cite{Allaire90b,Allaire90a} (see also~\cite{Mikelic91}) analysed all three cases: supercritical ($\alpha < 3$) leading to Darcy's law, critical ($\alpha = 3$) giving the same equation with the additional Brinkman term related to friction, as well as the subcritical case $\alpha > 3$. For some recent results in the incompressible setting see, e.g.,~\cite{FeireislNamlNecasova16,Hillairet18}. 

The results on the effective behavior of \emph{compressible} fluids in perforated domains are much more recent. Masmoudi~\cite{Masmoudi02,Masmoudi07} considered compressible Navier-Stokes equations in the domain perforated by periodic balls with $\alpha = 1$, and obtained in the limit the Darcy's law. This result was later generalized by Feireisl, Novotn\'y and Takahashi~\cite{FeireislNovotnyTakahashi10} also to the case of the full Navier-Stokes-Fourier system, which besides density and velocity of the fluid takes also into account the fluid temperature. In the case of slightly smaller balls with $1 < \alpha < 3$, assuming simultaneous rescaling of the pressure (Low-Mach number limit) to avoid the need to study the ``cell problem'' with unknown density, H\"ofer, Kowalczyk, and Schwarzacher~\cite{SchwarzacherDarcy} showed convergence of the rescaled solution to the Darcy's law. Finally, for tiny balls with $\alpha > 3$ (the subcritical case) Feireisl and Lu~\cite{FeireislLu} considered stationary Navier-Stokes equations and showed convergence to the same equations in the domain without holes. This result was later improved to the case of more general adiabatic exponent in the pressure~\cite{DieningFeireislLu} as well as to the time-dependent case~\cite{LuSchwarzacher}. In all these works the perforation is assumed to be periodic, or at least the minimal distance between the holes is comparable with $\eps$. 

In this work we also consider only the subcritical case $\alpha > 3$, but with random arrangement of holes instead of the periodic one. Unless one additionally assumes that the holes in the random case may not lie close to each other, the key argument in the previous works, the Bogovki\u{\i} operator, can not be constructed as before. Instead, we show that while the holes can be close to each other, there exists a fixed number $N$ such that there are at most $N$ balls clustered together. Since $N$ is fixed, the construction of the Bogovki\u{\i} operator can be done.% {\color{red} REMOVE This number $N$ grows with $\frac{1}{\alpha-3}$, hence our argument for the construction of the Bogovki\u{\i} operator requires $\alpha > 3$. This is in contrast with the previous works~\cite{DieningFeireislLu,FeireislLu,LuSchwarzacher}, where for the Bogovki\u{\i} $\alpha \ge 3$ is sufficient and $\alpha = 3$ is excluded only later in the argument, where one needs to identify the limiting problem - for the this reason the critical case $\alpha = 3$ remains open even in the periodic setting.} 

Our inspiration how to approach the case of randomly perforated domains comes from a recent work of Giunti, H\"ofer, and Vel\'{a}zquez. In~\cite{GiuntiHoeferVelazquez18}, they considered a Poisson equations in a domain perforated by random balls of critical size, thus obtaining the Brinkman law in the limit. The main challenge is to understand possible clustering of the holes and control the capacity of those. In subsequent works, they also considered the incompressible Stokes problem~\cite{GiuntiHoefer19,Giunti20} as well as convergence to the Darcy's model in the supercritical situation~\cite{Giunti21}. Compared to these works our situation is simpler, since in the subcritical situation we can prove a deterministic upper bound on the size of clusters. Let us also mention that previously Beliaev and Kozlov~\cite{BeliaevKozlov96} also considered homogenization of Stokes equations in a supercritically perforated domain. In this work we tackle the situation $\alpha > 3$, while the case $1 < \alpha < 3$ leading to Darcy's law will be analyzed in the forthcoming work~\cite{BellaOschmann21}.

A different, though quite related, setting is of the flow of a colloidal suspension, i.e., of a fluid mixed with moving obstacles. This question traces back to one part of Einstein's PhD thesis~\cite{Einstein06}, where he formally derives effective viscosity of such suspension, assuming low-volume fraction of the obstacles. With the recent progress in the field of stochastic homogenization, this question was rigorously approached by several groups, first under the assumption of uniform separation of balls~\cite{DuerickxGloria21a,NiethammerSchubert20} and very recently under less restrictive assumptions~\cite{Duerinckx20,VaretHoefer20}.

\section{Setting and the Main Results}\label{sec:Model}

In this section we define the perforated domain, formulate the Navier-Stokes equations governing the fluid motion, and state the main results. We consider $D \subset \R^3$ being a bounded domain with a $C^2$ boundary. To simplify the probabilistic argument we farther assume the domain $D$ is star-shaped w.r.t.~the origin, i.e., for any $x \in D$ the segment $\{ \lambda x : \lambda \in [0,1] \} \subset D$. 

We model the perforation of $D$ using the Poisson point process, though the arguments can be easily generalized to a larger class of point processes. For an intensity parameter $\lambda > 0$, the Poisson point process is defined as a \emph{random} collection of points $\Phi = \{z_j\}$ in $\R^3$ characterized by the following two properties:
\begin{itemize}
 \item for any two measurable and \emph{disjoint} sets $S_1, S_2 \subset \R^3$, the random variables $S_1 \cap \Phi$ and $S_2 \cap \Phi$ are independent;
 \item for any measurable set $S \in \R^3$ and $k \in \mathbb{N}$ holds $\mathbb P(N(S) = k) = \frac{(\lambda |S|)^ke^{-\lambda |S|}}{k!}$,
\end{itemize}
where $N(S)=\# (S\cap\Phi)$ counts the number of points $z_j\in S$ and $|S|$ denotes measure of $S$. In addition to the random locations of the balls, modeled by the above Poisson point process, we also assume the balls have random size. For that, let $\mathcal{R} = \{r_i\} \subset [0,\infty)$ be another random process of independent identically distributed random variables with finite {\color{red}$m$-th moment, i.e., 
\begin{equation*}%\label{Conditionm}
 \E(r_i^m) < \infty \text{ for some $m>0$},
\end{equation*}}%
and which are independent of $\Phi$. In other words, to each point $z_j \in \Phi$ (center of a ball) we associate also a radius of the ball $r_j \in [0,\infty)$. The exact range of $m$ we can work with will be specified in Theorem \ref{thm:MainBog} below. The random process $(\Phi,\mathcal{R})$ on $\R^3 \times \R_+$ is called \emph{marked Poisson point process}, and can be viewed as a random variable $\omega \in \Omega \mapsto (\Phi(\omega),\mathcal{R}(\omega))$, defined on an abstract probability space $(\Omega,\mathcal{F},\mathbb{P})$. 

To define the perforated domain $D_\eps$, for {\color{red}$\alpha > 2$} and $\eps > 0$ we set 
\begin{equation}\label{def:Domain}
 \Phi^{\eps}(D) := \left\{ z \in \Phi \cap \frac1\eps D: \dist(\eps z,\partial D) > \eps\right\}, \quad D_\eps := D \setminus \bigcup_{z_j \in \Phi^{\eps}(D)} B_{\eps^{\alpha} r_j}(\eps z_j).
\end{equation}
To simplify the exposition and to avoid the need to analyze behavior near the boundary, we only removed those balls from $D$ which are not too close to the boundary $\partial D$. This is also a common assumption in the periodic situation, see, e.g., \cite[relation $(1.3)$]{FeireislLu}. The domain $D$ being star-shaped implies that $\Phi^\eps(D)$ are monotonically increasing as $\eps \to 0$. 
\begin{comment}
{\color{red}REMOVED, DONE IN SECTION 6 Condition~\eqref{Conditionm} on the size of radii of the perforations is not just needed for technical purposes, but it is in a sense an optimal assumption. Indeed, one can show that in the case $m=3/(\alpha-3)$, for almost every realization of points and radii there is a sequence of $\eps \to 0$ such that for each such $\eps$ the rescaled radius $\eps^\alpha r_j$ of the largest ball in $D$ is of size $\eps^3$, i.e., $r_j \sim \eps^{3-\alpha}$. While \emph{one} large ball of size $\eps^3$ does not necessarily mean that the system should behave as in the critical case (which is expected to lead to a law of Brinkman type), nevertheless the presence of such large ball might change some of the properties of the system. Moreover, in the case $m < 3/(\alpha-3)$, the size of the largest ball would scale like $\eps^\nu$ with $\nu<3$, and there might be many balls of size at least $\eps^3$. 
}%
\end{comment}

Our main result is the following existence result for a uniformly bounded Bogovski\u{\i} operator:

\begin{theorem}\label{thm:MainBog}
Let {\color{red}$\alpha > 2$,} $D\subset\R^3$ be a bounded star-shaped domain w.r.t.~the origin with $C^2$-boundary, and $(\Phi,\mathcal{R})=(\{z_j\},\{r_j\})$ be a marked Poisson point process with intensity $\lambda > 0$. {\color{red}We assume the radii $r_j \ge 0$ fulfil $\E(r_j^m)<\infty$ for some $m>3/(\alpha-2)$.} Then, {\color{red} for all $1<q<3$ which fulfil
\begin{align}\label{Conditionm}
\alpha-\frac3m>\frac{3}{3-q},
\end{align}}%
there exists a random almost surely positive $\eps_0 = \e_0(\omega)$ such that for $0<\e\leq\e_0$ there exists a bounded linear operator
\begin{align*}
\B_\e:L^q(D_\e) / \mathbb{R} \to W_0^{1,q}(D_\e;\R^3)
\end{align*}
with $D_\eps$ defined in~\eqref{def:Domain}, such that for all $f\in L^q(D_\e)$ with $\int_{D_\eps} f = 0$ {\color{red}
\begin{align*}
\div (\B_\e(f))=f \text{ in } D_\e,\quad \|\B_\e(f)\|_{W_0^{1,q}(D_\e)}\leq C \, \|f\|_{L^q(D_\e)},
\end{align*}}%
where the constant $C>0$ is independent of $\omega$ and $\e$.  
\end{theorem}

In other words, Theorem~\ref{thm:MainBog} provides a solution $\vb u$ to the equation $\div \vb u = f$ in $D_\eps$ with $\vb u|_{\partial D_\eps}=0$ such that {\color{red} $\| \nabla \vb u \|_{L^q(D_\eps)} \le \| \vb u \|_{W^{1,q}_0(D_\eps)} \le C\, \| f \|_{L^q(D_\eps)}$. }%

\medskip 
As an application for this result we show homogenization of compressible Navier-Stokes equations in perforated domains $D_\eps$. For $\eps > 0$, the unknown density $\rho_\eps$ and velocity $\vb u_\eps$ of a viscous compressible fluid are described by
\begin{equation}\label{eq:NS}
\begin{aligned}
\partial_t \rho_\e + \div(\rho_\e \vb u_\e)&=0\hspace{3.8em}&&\text{ in } (0,T) \times D_\e,\\%,\label{eq:conteq}\\
\partial_t(\rho_\e \vb u_\e) + \div(\rho_\e \vb u_\e\otimes \vb u_\e) + \nabla p(\rho_\e) &= \div\mathbb{S}(\nabla \vb u_\e) + \rho_\e \vb f+\vb g&&\text{ in } (0,T) \times  D_\e,\\%\label{eq:momentum}\\
\vb u_\e&=0\hspace{3.8em} &&\text{ on } (0,T) \times \partial D_\e,%\label{eq:bdryval}
\end{aligned}
\end{equation}
where $\mathbb{S}$ denotes the Newtonian viscous stress tensor of the form
\begin{align*}
\mathbb{S}(\nabla \vb u)=\mu \bigg(\nabla \vb u+\nabla^T \vb u-\frac23 \div(\vb u)\mathbb{I}\bigg)+\eta\div(\vb u)\mathbb{I},\quad \mu > 0, \quad \eta \ge 0,
\end{align*}
$p(\rho) = a\rho^{\gamma}$ denotes the pressure with $a > 0$ and the adiabatic exponent $\gamma \ge 1$, and $\vb f$ and $\vb g$ are external forces, which are for simplicity assumed to satisfy $\|\vb f\|_{L^\infty((0,T)\times\R^3;\R^3)} + \|\vb g\|_{L^\infty((0,T)\times\R^3;\R^3)} \le C$. 
We also fix the total mass 
%\begin{align*}
$\int_{D_\e} \rho_\e(0,\cdot)=\mass>0$
%\end{align*}
independently of $\e>0$, and supplement the equations with the initial conditions for $\rho_\eps$ and $\vb u_\eps$. 

While the existence of classical solutions to~\eqref{eq:NS} is known only in some special cases, the existence theory for weak solutions is quite developed~\cite{Feireisl04,Lions98,Novotny2004}. In particular, for fixed $\eps > 0$ the domain $D_\eps$ is smooth enough to grant existence of global weak solutions. The key point here is to overcome the lack of uniform estimates in $\eps$ on the smoothness of $D_\eps$, in particular to obtain uniform bounds on the solution, which in this setting are usually obtained using a bounded Bogovki\u{\i} operator. More precisely, Theorem~\ref{thm:MainBog} together with a simple existence result for the cut-off function (see Lemma~\ref{lm:cutoff}) are the only points in the arguments~\cite{DieningFeireislLu,FeireislLu,LuSchwarzacher}, where the information on the structure of the perforation in $D_\eps$ is being used. 

In the following we state one of the implications of Theorem~\ref{thm:MainBog}, the corresponding precise formulation as well as the definition of the finite energy weak solutions in the case of periodic perforation being~\cite[Definition 1.3, Theorem 1.6]{LuSchwarzacher}:

\begin{theorem}\label{thm:main}
{\color{red}Assume $\alpha>3$.} Let $D\subset \R^3$ be a bounded star-shaped domain w.r.t.~the origin with $C^2$-boundary and let $(\Phi,\mathcal{R})=(\{z_j\},\{r_j\})$ be a marked Poisson point process with intensity $\lambda > 0$, and $r_j \ge 0$ with {\color{red}$\mathbb{E}(r_j^M) < \infty$, $M=\max\{3,m\}$, where $m>3/(\alpha-3)$.} Farther let
\begin{align*}
\mass>0,\; \gamma>6.
\end{align*}
For $0 < \eps < 1$ let $[\rho_\eps,\vb u_\e]$ be a family of finite energy weak solutions for the no-slip compressible Navier-Stokes equations~\eqref{eq:NS} in $(0,T)\times D_\e$ with controlled initial conditions
\begin{equation*}
 \rho_\e(0,\cdot) := \rho_{0,\eps}, \ \vb u_\e(0,\cdot) := \vb u_{0,\eps}, \ \sup_{0 < \eps < 1} (\|\rho_{0,\eps}\|_{L^\gamma(D_\eps)} + \|\vb u_{0,\eps}\|_{L^3(D_\eps)} ) = B < \infty,
\end{equation*}
with $D_\e$ as in~\eqref{def:Domain}. Then for almost every $\omega \in \Omega$ there exists $\eps_0=\eps_0(\omega) > 0$, such that the following holds: 
There exists a constant $C(B)>0$, which is independent of $\e$, such that 
\begin{align*}
\sup_{\e\in (0,\eps_0)} ( \|\rho_\e\|_{L^\infty(0,T;L^{\gamma}(D_\eps))}+ \|\rho_\e\|_{L^{\frac{5\gamma}{3}-1}((0,T) \times D_\e)} + \|\vb u_\e\|_{L^2(0,T;W^{1,2}_0(D_\e))}) \leq C
\end{align*}
and, up to a subsequence, the zero extensions satisfy
\begin{align*}
\tilde{\rho}_\e\overset{\ast}\weak \rho \text{ in } L^\infty(0,T;L^\gamma(D)),\quad \tilde{\vb u}_\e\weak \vb u \text{ in } L^2(0,T;W^{1,2}_0(D)),
\end{align*}
where the limit $[\rho,\vb u]$ is a renormalized finite energy weak solution to the problem \eqref{eq:NS} in the limit domain $D$ provided $\frac{\gamma -6 }{2\gamma - 3}\alpha > 3$. 
\end{theorem}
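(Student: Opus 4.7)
\emph{Proof plan.} My approach would follow the Feireisl--Lions strategy adapted to perforated domains in~\cite{FeireislLu,DieningFeireislLu,LuSchwarzacher}, replacing their uniformly bounded Bogovski\u{\i} operator by the one from Theorem~\ref{thm:MainBog} and their periodic cut-off by Lemma~\ref{lm:cutoff}; these two ingredients are the only places where the geometry of the perforation enters, so once they are in place the rest of the argument is essentially as in the periodic setting.

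\emph{Step 1: uniform bounds.} The standard finite-energy identity together with the controlled initial data immediately gives $\|\rho_\e\|_{L^\infty(0,T;L^\gamma(D_\e))}+\|\vb u_\e\|_{L^2(0,T;W^{1,2}_0(D_\e))}\le C(B)$. To improve the integrability of the density I would test the momentum equation with $\phi_\e:=\B_\e(\rho_\e^\theta-\langle\rho_\e^\theta\rangle_{D_\e})$ for $\theta=\tfrac{2\gamma}{3}-1$, applying Theorem~\ref{thm:MainBog} with exponent $q=\gamma/\theta=\tfrac{3\gamma}{2\gamma-3}$. Since $\gamma>6$ forces $q<2$, the prefactor $1+\e^{2/q-1}$ is bounded uniformly in $\e$, and the usual manipulation then yields the remaining bound $\|\rho_\e\|_{L^{5\gamma/3-1}((0,T)\times D_\e)}\le C(B)$.

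\emph{Step 2: weak compactness and limit on $D$.} The uniform bounds deliver the announced weak/weak-$*$ limits of the zero extensions. For each test function $\varphi\in C^\infty_c((0,T)\times D)$, the plan is to multiply $\varphi$ by the cut-off $\psi_\e$ of Lemma~\ref{lm:cutoff}, which vanishes near the holes, tends to $1$, and whose gradient is supported in a set of vanishing measure, thereby producing a valid test function for~\eqref{eq:NS}. Passing to the limit in the resulting equations (the $\nabla\psi_\e$-errors vanish thanks to the perforation volume scaling like $\e^{3(\alpha-1)}$) will give the continuity equation and the weak form of the momentum equation on $D$, with the nonlinear pressure replaced by its weak limit $\overline{p(\rho)}$.

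\emph{Step 3: main obstacle, identifying $\overline{p(\rho)}=p(\rho)$.} The hard part will be the strong a.e.\ convergence of $\tilde\rho_\e$. Following Feireisl, I would establish the effective viscous flux identity by testing the momentum equations on $D_\e$ and on $D$ with functions of the form $\varphi\,\B_\e[T_k(\rho_\e)-\langle T_k(\rho_\e)\rangle_{D_\e}]$ and the analogous limit object built from $T_k(\rho)$, where $T_k$ is a truncation at level $k$. The delicate bookkeeping requires simultaneous control of the Bogovski\u{\i} prefactor $1+\e^{2/q-1}$ from Theorem~\ref{thm:MainBog} and of the commutator errors supported on a neighborhood of the holes of volume $\sim\e^{3(\alpha-1)}$; interpolating these against the improved pressure bound $\|\rho_\e\|_{L^{5\gamma/3-1}}$ produces powers of $\e$ which are non-negative exactly when $\frac{\gamma-6}{2\gamma-3}\alpha>3$, which is the origin of this sharp threshold. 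Once the effective viscous flux identity is in hand, the assumption $\gamma>6$ makes Lions' renormalization of the continuity equation immediately applicable, and the standard oscillation defect measure argument then yields strong $L^1$ convergence of the density, hence $\overline{p(\rho)}=p(\rho)$ and the identification of $[\rho,\vb u]$ as a renormalized finite energy weak solution on $D$.
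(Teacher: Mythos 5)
Your proposal matches the paper's approach: the paper establishes Theorem~\ref{thm:MainBog} and Lemma~\ref{lm:cutoff} and then declares that the proof of Theorem~\ref{thm:main} ``follows verbatim as in~\cite{LuSchwarzacher},'' which is precisely the Feireisl--Lions plan you sketch (uniform bounds via a Bogovski\u{\i} test function, weak limits with the cut-off $g_\e$, effective viscous flux for $\overline{p(\rho)}=p(\rho)$). Your exponent bookkeeping is consistent with how those ingredients enter there: $\theta=\tfrac{2\gamma}{3}-1$ gives $\rho_\e^\theta\in L^\infty_t L^{3\gamma/(2\gamma-3)}_x$ with $3\gamma/(2\gamma-3)<2$ exactly when $\gamma>6$ so the prefactor in Theorem~\ref{thm:MainBog} stays bounded, and the threshold $\tfrac{\gamma-6}{2\gamma-3}\alpha>3$ indeed comes from requiring $\sigma>0$ in Lemma~\ref{lm:cutoff} after H\"older pairs $p(\tilde\rho_\e)\in L^{(5\gamma-3)/(3\gamma)}$ against $\nabla g_\e\in L^r$ with $r=(5\gamma-3)/(2\gamma-3)$.
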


The restriction $M\geq 3$ is made in order to construct suitable cut-off functions, as will be clear from the proof of Lemma \ref{lm:cutoff} later on. Using Theorem~\ref{thm:MainBog} and Lemma~\ref{lm:cutoff}, which are the only two spots in the proof where the structure of the perforation plays any role, the proof of Theorem~\ref{thm:main} follows verbatim as in~\cite{LuSchwarzacher}. To manifest how Theorem~\ref{thm:MainBog} and Lemma~\ref{lm:cutoff} are actually applied, in Section~\ref{sec:ApplNSE} we will formulate a similar (but simpler) homogenization statement for the stationary case and sketch its proof. 

\subsection{Notation}
Through the whole paper, we use the following notation:
\begin{itemize}
\item $(\Omega, \mathcal{F},\P)$ is the probability space associated to the marked point process $(\Phi,\mathcal{R})$.
\item $L_0^p(D):=\{f\in L^p(D): \int_D f=0\}$
\item $|S|$ denotes the Lebesgue measure of a measurable set $S\subset\R^3$.
\item For a function $f$ with domain of definition $D$ or $D_\eps$, we denote by $\tilde{f}$ the zero extension to $\R^3$, that is, we define
\begin{align*}
\tilde{f}=f \text{ in } D \textrm{ or } D_\e,\quad \tilde{f}=0 \text{ in } \R^3\setminus D.
\end{align*}
\item Boxes are sets of the form $A_x \times A_y \times A_z$, where $A_x,A_y,A_z \subset \R$ are intervals. 
\item For a factor $\lambda >0$ and a set $M\subset\R^d$ we define $\lambda M := \{ \lambda x : x \in M \}$.
\item For two sets $M,N\subset\R^3$, we set $\displaystyle\dist(M,N)=\inf_{x\in M, y\in N} |x-y|$, where $|x|$ is the usual Euclidean norm, and $\displaystyle\dist_\infty(M,N)=\inf_{x\in M, y\in N} \|x-y\|_\infty=\inf_{x\in M, y\in N} \max_{1\leq i\leq 3} |x_i-y_i|$.
\item We write $a\lesssim b$ whenever there is a constant $C>0$ that does not depend on $\e, a$ and $b$ such that $a\leq C\, b$. The constant $C$ might change its value whenever it occurs.
\end{itemize}

Moreover, if no ambiguity occurs, we denote the function spaces as in the scalar case even if the functions are vector- or matrix-valued, e.g., we write $L^p(D)$ instead of $L^p(D;\R^3)$.\\

{\bf Organization of the paper:} The paper is organized as follows. In the next section we formulate the probabilistic statements (Theorem~\ref{thm:MainProbThm} and Proposition~\ref{prop:cubeN}) as well as the analytical framework (Lemma~\ref{lem:O-EpsJohn}) needed for the construction of the Bogovski\u{\i} operator (Theorem~\ref{thm:MainBog}). The proofs of these results are content of Section~\ref{sec:Prob} (probabilistic part) and Section~\ref{sec:BogOp} (analytical part). The last section is devoted to a quick sketch of the homogenization result. 

%In Section \ref{sec:Model} we state some technical lemmata which are crucial to prove the norm bound for the desired Bogovski\u{\i} operator. Section \ref{sec:BogOp} includes the proofs of these results. Section~\ref{sec:ApplNSE} is devoted to the application to homogenization of the Navier-Stokes equations for a compressible fluid. Section \ref{sec:Prob} contains some probabilistic results and their proofs

\section{Ingredients for the proof of Theorem~\ref{thm:MainBog}}\label{sect:ingr}

The proof of Theorem~\ref{thm:MainBog} consists of two parts: stochastic and analytical. The stochastic result, Theorem~\ref{thm:MainProbThm}, states that for small enough (depending on $\omega \in \Omega$) $\eps > 0$ the balls with radii $\eps^{\alpha} r_j$ are disjoint and actually little bit separated. The previous construction of the uniformly bounded Bogovki\u{\i} operator requires a boundary layer of size third root of the radius of the balls without hitting other balls -- this is where the condition $\alpha \ge 3$ enters. Since in a generic random arrangement of balls the balls are \emph{not} that much separated, we relax this assumption by replacing one ball with finitely many balls. More precisely, we show that there exists a deterministic number $N=N(\alpha)$ such that we can group balls into clusters of size at most $N$ so that the clusters stay separated from each other. %{\color{red}REMOVE Since this number $N$ is inversely related to $\alpha - 3$, we get finite $N$ only if $\alpha > 3$.} 

\begin{theorem}\label{thm:MainProbThm}
{\color{red}Let $\alpha>2$ and $\lambda>0$ be the intensity of a marked Poisson point process $(\Phi,\mathcal{R}) = (\{z_j\},\{r_j\})$ with $r_j \ge 0$ and $\E(r_j^m)<\infty$, where $m>0$ satisfies
\begin{align*}
m>\frac{3}{\alpha-2}.
\end{align*}
Let $0<\delta< \alpha-1-\frac3m$, $\kappa \in (\max(1,\delta),\alpha-1-\frac3m)$, and $\tau \ge 1$. Then there exists a random variable $\eps_0$, which is almost surely positive, satisfying: }%
%, $\Phi^\e(D)$ be defined as in \eqref{def:PhiEps}, $\{r_i\}_{i\in\N}\subset [0,\infty)$ be iid.~random variables which satisfy \eqref{eq:assumptionRadii}, $\alpha>3$, $\kappa\in (1,\alpha-2]$ and $\tau\geq 1$. Further, let $\delta=(\alpha-3)/3>0$. Then there exists an $\e_0(\omega)>0$ such that 
%for all $0<\e\leq \e_0$, we have:
\begin{enumerate}%[label={(\arabic*)}]
\item For every $0<\e\leq \e_0$ holds: \label{veta31jedna}
\begin{equation*}
\max_{z_i\in\Phi^\e(D)} \tau \e^\alpha r_i\leq \e^{1+\kappa} 
\end{equation*}
and for every $z_i,z_j\in\Phi^\e(D),\, z_i\neq z_j$ 
\begin{equation*}
\B_{\tau\e^{1+\kappa}}(\e z_i)\cap B_{\tau\e^{1+\kappa}}(\e z_j)=\emptyset.
\end{equation*}

\item Let 
\begin{align}\label{def:N(delta)}
%N:=N(\delta):=8\big(\left\lceil 2^{1+\delta}\right\rceil +1\big)\bigg(2+\left\lceil\frac{1}{\delta}\right\rceil\bigg).
N:=N(\delta):=8 \bigg(2+\left\lceil\frac{1}{\delta}\right\rceil\bigg).
\end{align}
Then for each $0 < \eps \le \eps_0$ there are finitely many open boxes $\{I_i^\eps\} \subset D$ satisfying: 
% is a finite index set $\mathcal{I}^\e$ such that for all $\nu\in\mathcal{I}^\e$ there are open boxes $O_\nu^\e$ and $I_\nu^\e\subset O_\nu^\e$ such that:
\begin{enumerate}
\item The boxes $I_i^\eps$ cover the balls, i.e., for any $z \in\Phi^\e(D)$ we have $B_{\e^{1+\kappa}}(\e z)\subset \bigcup_i I_i^\e$. \label{thm31labela}
\item Any box $I_i^\e$ contains at most $N$ points from $\eps \Phi^\e(D)$. \label{thm31labelb}
\item Balls are well inside the box: for $\e z \in I_i^\e$ holds $\dist(B_{\e^{1+\kappa}}(\e z),\d I_i^\e)\geq \frac{1}{16N}\e^{1+\delta}$. \label{thm31labelc}% for all $\e z_i\in I_\nu^\e$.
%\item $\dist(\d O_\nu^\e, \d I_\nu^\e)\geq \e^{1+\delta}/(2N)^2$.
\item Any two distinct boxes $I_i^\eps$ and $I_j^\eps$ are well separated: $\dist_\infty (I_i^\e,I_j^\e) \ge \frac{1}{4N}\e^{1+\delta}$. \label{thm31labeld}
%For all $\nu,\tilde{\nu}\in\mathcal{I}^\e$ with $\nu\neq\tilde{\nu}$, the boxes $O_\nu^\e$ and $O_{\tilde{\nu}}^\e$ are essentially disjoint, that is, $|O_\nu^\e\cap O_{\tilde{\nu}}^\e|=0$.
\item The shortest side of $I^\e_i$ is at least $\frac{1}{2N}\eps^{1+\delta}$ while the longest side is at most $\eps^{1+\delta}$. 
%ratio between the longest and shortest side length of $I^\e_i$ is bounded by $N$.
\label{thm31labele}%\footnote{needs to modify - shortest at least ..., longest at most ...}
\end{enumerate} \label{lem:GroupingBalls}
\end{enumerate}
\end{theorem}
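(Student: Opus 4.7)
The plan is to prove Parts~(1) and~(2) via Borel--Cantelli applied to the dyadic subsequence $\eps_n := 2^{-n}$, using the monotonicity $\Phi^{\eps}(D) \subset \Phi^{\eps'}(D)$ for $\eps \ge \eps'$ (from $D$ being star-shaped) to interpolate to all $\eps \in (0,\eps_0]$ at the cost of constants absorbed into $\tau$.

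For Part~(1), I would compute the expected count of each bad configuration. Using $\E(r^3)<\infty$ and Markov's inequality, the expected number of indices $z_j \in \Phi \cap \eps^{-1}D$ with $r_j > \tau^{-1}\eps^{1+\kappa-\alpha}$ is at most $\lambda|\eps^{-1}D|\,\mathbb{P}(r > \tau^{-1}\eps^{1+\kappa-\alpha}) \lesssim \tau^3\eps^{3(\alpha-\kappa-2)}$, which is summable in $n$ precisely because $\kappa<\alpha-2$. For ball disjointness, the second-order Campbell--Mecke formula gives that the expected number of pairs $z_i\ne z_j\in\Phi\cap\eps^{-1}D$ with $|z_i-z_j|\le 2\tau\eps^\kappa$ is bounded by $\lesssim\tau^3\eps^{3\kappa-3}$, summable since $\kappa>1$. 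Borel--Cantelli then yields both claims almost surely for all small $\eps$.

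For Part~(2), the probabilistic input is a cluster-size bound. Call two centers $\eps z_i,\eps z_j$ \emph{linked} whenever $|\eps z_i-\eps z_j|\le\eps^{1+\delta}/N$, and take connected components (``clusters''). A cluster of more than $N$ points is contained in some ball of radius $\eps^\delta$ inside $\eps^{-1}D$; the Poisson probability that a fixed such ball contains $\ge N+1$ points of $\Phi$ is $\lesssim(\lambda\eps^{3\delta})^{N+1}$, and a union bound over $\lesssim\eps^{-\alpha}$ ball positions gives a tail $\lesssim\eps^{3\delta(N+1)-\alpha}$, summable in $n$ because the choice $N=8(2+\lceil 1/\delta\rceil)$ easily satisfies $3\delta(N+1) > \alpha = 3+3\delta$. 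Borel--Cantelli thus ensures a.s.\ for $\eps$ small that every cluster has at most $N$ centers and hence diameter at most $(N-1)\,\eps^{1+\delta}/N \le (1-1/N)\,\eps^{1+\delta}$.

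The geometric step converts each cluster into a box $I_i^\eps$. Starting from the axis-aligned bounding box of the cluster's balls, I would inflate each face outward into a ``clean'' band of admissible width $\sim \eps^{1+\delta}/N$ that avoids every ball; since by Part~(1) each ball has radius $\eps^{1+\kappa}\ll\eps^{1+\delta}/N$ and only $O(N)$ centers lie in a relevant neighborhood of the face, subdividing the band into $O(N)$ thin sub-slabs and pigeonholing leaves at least one uncontaminated sub-slab. The window of admissible inflations is tuned so that its lower end yields inner margin $\ge\eps^{1+\delta}/(16N)$ (property~(c)), its upper end leaves inter-box separation $\ge\eps^{1+\delta}/(4N)$ after subtracting from the $>\eps^{1+\delta}/N$ inter-cluster gap (property~(d)), and the cluster diameter together with the two face-inflations falls in $[\eps^{1+\delta}/(2N),\eps^{1+\delta}]$ (property~(e)); properties~(a) and (b) are immediate from the construction. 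The main obstacle is the \emph{global} consistency of the clean-slab choices, since a face lying between two clusters must be clean from both sides at once. I would resolve this by first fixing one ``clean'' plane per coordinate direction in every inter-cluster gap, determined jointly by all nearby centers and radii, and only then carving $\R^3$ along these global planes into the $I_i^\eps$; the slack granted by Part~(1) together with the per-cluster point bound of $N$ is precisely what makes such a consistent global choice possible.
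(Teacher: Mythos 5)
Your Part (1) is a valid alternative: instead of the paper's appeal to the strong law of large numbers (for the radius bound) and a cube-lattice covering estimate (for pairwise disjointness), you use a first-moment argument via Markov's inequality and the second-order Campbell--Mecke formula, with Borel--Cantelli along the dyadic sequence and the star-shaped monotonicity $\Phi^{\eps}(D)\subset\Phi^{\eps'}(D)$ to pass to all $\eps$. The summability exponents $3(\alpha-\kappa-2)$ and $3(\kappa-1)$ check out, matching the hypothesis $\kappa\in(\max(1,\delta),\alpha-2)$.

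Part (2), however, has a genuine gap in the geometric step. Defining clusters by Euclidean linking at threshold $\eps^{1+\delta}/N$ and then taking axis-aligned bounding boxes does not produce pairwise-disjoint, well-separated boxes: two unlinked clusters can have intersecting bounding boxes. For example (scaling so the threshold is $t$), the chain $A=\{(0,0),(t,0),(2t,0),(2t,t),(2t,2t)\}$ is connected, its bounding box is $[0,2t]^2$, yet the singleton cluster $\{(0,2t)\}$ lies inside that box while being at distance $\ge 2t>t$ from every point of $A$. So the ``inflate each face outward into a clean band'' step is not even well-posed, and this is not merely the consistency issue you flag -- no choice of clean planes separates these bounding boxes because they already overlap. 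A second, smaller issue: a cluster whose points are (nearly) collinear yields a bounding box with a degenerate short side; inflating by $\sim\eps^{1+\delta}/(16N)$ per face leaves the shortest side $\sim\eps^{1+\delta}/(8N)$, which violates the lower bound $\eps^{1+\delta}/(2N)$ in property~\eqref{thm31labele}. The paper sidesteps both problems by working on the lattice $\bigl(\tfrac{1}{2N}\eps^{1+\delta}\bigr)\Z^3$ from the start: it seeds with unit lattice cubes containing points, repeatedly replaces any two touching boxes by their (lattice-aligned) bounding box, and uses Proposition~\ref{prop:cubeN} to show the point count stays $\le N$ after each merge (since a merged box still fits in a cube of side $\eps^{1+\delta}$). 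Termination and lattice alignment then give the $\dist_\infty\ge l$ separation and the side-length bounds automatically, without ever having to ``find a clean plane.'' If you want to salvage your Euclidean-clustering route, you would in effect have to reintroduce this merging step on the bounding boxes, at which point you have rederived the paper's construction; replacing the Euclidean clusters with lattice-cube clusters from the outset is the cleaner path.
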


The proof of the second part of Theorem~\ref{thm:MainProbThm} uses that for $0 < \e \le \eps_0$ any cube with side length $\eps^{1+\delta}$ contains at most $N$ points from the Poisson point process. This can hold only if $\delta > 0$, since the number of points in a cube of size $\eps^{1+0}$ is Poisson distributed, i.e., any number of points appears there with small but positive probability. 

\begin{prop}\label{prop:cubeN}
Let $d\geq 1$, $\delta>0$ be fixed, and let $\{z_j\}\subset\R^d$ be points generated by a Poisson point process of intensity $\lambda>0$. In addition, let $D \subset \R^d$ be a bounded star-shaped domain. Then there exists a deterministic constant $N(\delta, d)\in\N$ and a random variable $\e_0(\omega,\lambda,D)$, which is almost surely positive, such that for all $0<\e\leq\e_0$ and any $x \in \R^d$ the cube $x + [0,\eps^{1+\delta}]^d$ contains at most $N$ points from $D \cap \eps \Phi$.
\end{prop}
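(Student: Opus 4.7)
\medskip
\noindent\textbf{Proof proposal.} The plan is to reduce the continuous-in-$\eps$ statement to a countable family of events to which the Borel--Cantelli lemma can be applied. Rescaling by $1/\eps$, a cube $x + [0,\eps^{1+\delta}]^d$ containing points of $D\cap\eps\Phi$ corresponds to a cube $\eps^{-1}x + [0,\eps^\delta]^d$ containing points of $(\eps^{-1}D)\cap\Phi$. Since $D$ is bounded, say $D \subset B_R(0)$, we have $\eps^{-1}D\subset B_{R/\eps}(0)$. The goal is therefore to bound, uniformly in placement, the number of points of $\Phi$ contained in an arbitrary cube of side $\eps^\delta$ sitting inside $B_{R/\eps}(0)$.

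First I would discretize the scale by setting $\eps_n := 2^{-n}$. For any $\eps\in[\eps_{n+1},\eps_n]$ one has $\eps^{-1}D\subset B_{2R/\eps_n}(0)$ and $\eps^\delta\le\eps_n^\delta$, and moreover any cube of side at most $\eps_n^\delta$ is contained in a cube of side $2\eps_n^\delta$ whose corners lie on the lattice $\eps_n^\delta\Z^d$. Hence it suffices to prove that, almost surely for all large $n$, every such lattice-aligned cube of side $2\eps_n^\delta$ meeting $B_{2R/\eps_n}(0)$ contains at most $N$ points from $\Phi$. The number of these grid cubes is at most $C(R,d)\,\eps_n^{-d(1+\delta)}$. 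For each one, $N(Q)$ is Poisson with parameter $\mu_n := \lambda(2\eps_n^\delta)^d\to 0$, so for $n$ large
\begin{equation*}
\P\bigl(N(Q)\ge N+1\bigr)\le \sum_{j\ge N+1}\frac{\mu_n^j}{j!}\le C_N\,\mu_n^{N+1}\le C(\lambda,d,N)\,\eps_n^{d\delta(N+1)}.
\end{equation*}
A union bound therefore yields
\begin{equation*}
\P\bigl(\exists\text{ grid cube of side }2\eps_n^\delta\text{ in }B_{2R/\eps_n}(0)\text{ with }>N\text{ points}\bigr)\le C\,\eps_n^{-d(1+\delta)}\,\eps_n^{d\delta(N+1)}=C\,2^{-n(d\delta N-d)}.
\end{equation*}

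This tail is summable in $n$ as soon as $N>1/\delta$, so any $N=N(\delta,d)\in\mathbb{N}$ with, e.g., $N\ge\lceil 2/\delta\rceil+1$ works (consistent with the formula \eqref{def:N(delta)}). By Borel--Cantelli, almost surely there is a random $n_0(\omega)$ such that for every $n\ge n_0$ no lattice-aligned cube of side $2\eps_n^\delta$ inside $B_{2R/\eps_n}(0)$ contains more than $N$ points of $\Phi$. Setting $\eps_0:=\eps_{n_0}$ and invoking the inclusion in the previous paragraph transfers the bound to all $\eps\in(0,\eps_0]$ and all cube placements, giving the claim.

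The main obstacle is arranging that the statement -- uniform both in $\eps\in(0,\eps_0]$ and in the position of the cube, so \emph{a priori} an uncountable collection of events -- is reduced to a countable summable family; this is achieved simultaneously by the dyadic choice of $\eps_n$ and by the lattice-cube covering trick, which loses only a factor $2$ in side length. The borderline between summability and divergence of the union bound, $d\delta N = d$, is precisely what forces $N\gtrsim 1/\delta$ and shows why the argument cannot extend to $\delta=0$: in that regime $\mu_n$ does not tend to zero and a cube of side $\eps$ carries a non-degenerate Poisson count, so no deterministic upper bound is available.
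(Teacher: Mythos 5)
Your proposal is correct and follows essentially the same approach as the paper: reduce to a dyadic sequence $\eps_n = 2^{-n}$, cover an arbitrarily placed small cube by lattice-aligned cubes at the dyadic scale, estimate the Poisson tail $\P(N(Q)\ge N+1)\lesssim\mu_n^{N+1}$, take a union bound over the $O(\eps_n^{-d(1+\delta)})$ cubes, and apply Borel--Cantelli to get summability under $\delta N>1$. One small bonus of your bookkeeping — rescaling to $\eps^{-1}D\subset B_{R/\eps}(0)$ rather than staying in the $\tfrac{\eps}{2}\Phi\cap D$ picture — is that it uses only boundedness and avoids the star-shapedness of $D$, which the paper invokes when passing from dyadic to arbitrary $\eps$.
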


To construct the Bogovski\u{\i} operator $\B_\eps$ in $D_\eps$ from Theorem~\ref{thm:MainBog} we use local Bogovski\u{\i} operators for each box $I_i^\e$ to modify the Bogovski\u{\i} operator in $D$. Instead of making explicit construction in each box $I_i^\eps$, we invoke a general result on the existence of Bogovski\u{\i} operator~\cite{AcostaDuran06} for a class of domains (so-called John domains) and show that each box $I_i^\eps$ minus the balls is a John domain -- for this the outcomes of Theorem~\ref{thm:MainProbThm} will be crucial. In particular, we need that there are at most $N$ balls in one box, the balls are not close to each other and they are tiny, compared to the size of the box. 

%The proofs of Theorem \ref{thm:MainProbThm} \ref{lem:e2disjoint} and \ref{thm:MainProbThm} \ref{lem:finitemanyballs} will be given in a more general setting in Section \ref{sec:Prob}. To state our final outcome, we need the notion of so-called John domains. We will give a more detailed description in section \ref{sec:ProofJohn}.

\begin{defin}\label{def:John}
For a constant $c>0$, a domain $U\subset \R^d$ is said to be a $c-$John domain if there exists a point $x_0\in U$ such that for any point $x\in U$ there is a rectifiable path $\Gamma:[0,\ell]\to U$ which is parametrized by arc length with
\begin{align*}
\Gamma(0)=x,\quad \Gamma(\ell)=x_0,\quad \forall t\in [0,\ell]: |\Gamma(t)-x|\leq c\, \dist(\Gamma(t),\d U).
\end{align*}
\end{defin}
The following lemma states that any $I_i^\eps$ is a $c-$John domain:
\begin{lemma}\label{lem:O-EpsJohn}%\footnote{2change: not refering to thm...}
Under the assumptions of Theorem~\ref{thm:MainProbThm} {\color{red}for fixed
\begin{align}\label{eq:DefDelta}
0<\delta<\frac{\alpha-2-\frac3m}{2},
\end{align}}%
let $0 < \eps \le \eps_0$. Then for every box $I_i^\eps$ constructed in Theorem~\ref{thm:MainProbThm}, the domain 
\begin{equation}
U:=I^\eps_i \setminus \bigcup\limits_{z_j\in \eps^{-1} I_i^\e \cap \Phi^\eps(D)} B_{\e^\alpha r_j}(\e z_j)
\end{equation}
% Let $\e_0>0$ and $O_\nu^\e$ be as in Theorem \ref{thm:MainProbThm} \ref{lem:GroupingBalls}. Then, for all $0<\e\leq\e_0$ and all $\nu\in\mathcal{I}^\e$, the domain
% \begin{align*}
% U:=O_\nu^\e\setminus \bigcup\limits_{z_i\in O^\e} B_{r_i\e^\alpha}(\e z_i)
% \end{align*}
is a $c-$John domain with $c=c(N)$, where $N$ is defined in~\eqref{def:N(delta)}. %, where the constant $c$ just depends on the number $N$ defined in \eqref{def:N(delta)}.

In particular, for any $1 < q < \infty$ there exists a uniformly bounded Bogovki\u{\i} operator $\B_U : L^q_0(U) \to W^{1,q}_0(U)$, i.e., there exists a constant $C$, independent of $\eps$, such that for any $f \in L^q_0(U)$%\footnote{Flo, I did not understand your comment $\partial_\eps ... = 0$.}
\begin{align*}
\div(\B_{U}(f))=f,\quad ||\B_U(f)||_{W_0^{1,q}(U)} \leq C ||f||_{L_0^q(U)}.
\end{align*}
\end{lemma}

\section{The probabilistic results}\label{sec:Prob}
The goal of this section is to prove the stochastic part of the result, Theorem~\ref{thm:MainProbThm}, second part of which is based on Proposition~\ref{prop:cubeN} about the distribution of the random points, modeled by the Poisson point process. Fixing $\delta > 0$, this proposition states that for $\eps$ small enough, for any cube of side length $\eps^{1+\delta}$ inside a fixed domain $D$ there are at most $N=N(\delta,d)$ of the rescaled points $\eps z$ in the cube. The heuristic explanation of this is as follows: assuming we only need to consider a disjoint set of cubes and fixing $\eps>0$, the number of cubes in $D$ which we have to consider scales like $\frac{1}{\eps^{(1+\delta)d}}$. At the same time, the probability of one cube of side length $\eps^{1+\delta}$ having more than $N$ points scales in the case of the Poisson point process like $(\frac{\eps^{(1+\delta)d}}{\eps^d})^N = \eps^{\delta N d}$. Hence, choosing $N$ large enough so that $\frac{1}{\eps^{(1+\delta)d}} \eps^{\delta N d} \ll 1$ should lead to the result. 

\begin{proof}[Proof of Proposition~\ref{prop:cubeN}]
We start with a special case, which will be later used to prove the general case:

\medskip\noindent
{\bf Claim:} There exists $N_1 \in \mathbb{N}$ and an a.s.~positive random variable $\eps_0(\omega)$ such that for any dyadic $\eps = 2^{-l}$ smaller than $\eps_0$, any half-closed cube $Q_{\eps,z} = \eps^{1+\delta} z + [0,\eps^{1+\delta})^d$, $z \in \mathbb{Z}^d$, contains at most $N_1$ points from $\frac{\eps}{2}\Phi(\omega) \cap D$.

\medskip
If rescaled up by a factor $2$ the claim says that in a cube with side length $(2\eps)^{1+\delta}$ there are at most $N_1$ points, and we are considering points (more precisely cubes) inside $2D$ instead of $D$ only. The reason for this choice will be clear later in the proof. 

% First, let us observe that it is enough to prove the proposition for dyadic values of $\eps = 2^l$, i.e. that there exists $N_1 \in \N$ and a random a.s. positive $\eps_0$ such that for any $l \ge \frac{\log(\eps_0)}{\log(2)}$ and $\eps_l := 2^{-l}$ any cube with side length $\eps_l^{1+\delta}$ contains at most $N_1$ points from $\eps_l \Phi \cap D$. 
% 
% Indeed, assuming this, for any $\eps$ satisfying $\e_l \le \eps < e_{l-1} \le \eps_0$ any cube $Q_\e$ with side length $\eps^{1+\delta}$ can be covered by at most $\big(\big\lceil 2^{1+\delta} \big\rceil+1\big)^d$ many cubes $Q_{\eps_l,i}$ of side length $\e_l^{1+\delta}$. By the above assumption any cube 
% %
% Assuming that any $C_{\e_l}^i$ contains no more that $N_1$ points, we have that the number of points inside $C_\e$ is bounded by
% \begin{align*}
% \#\{z_j\in C_\e\}\leq \sum_i \#\{z_j\in C_{\e_l}^i\}\leq \big(\big\lceil 2^{1+\delta} \big\rceil+1\big)^d\cdot N_1.
% \end{align*}
% %
% Finally, since for any $y\in\R^d$, a shifted cube $y+C_\e$ hits at most $2^d$ cubes $C_\e$, we define $N$ to be
% \begin{align*}
% N:=2^d\big(\big\lceil 2^{1+\delta} \big\rceil+1\big)^d N_1=2^d\big(\big\lceil 2^{1+\delta} \big\rceil+1\big)^d\bigg(2+\left\lceil\frac1\delta\right\rceil\bigg)
% \end{align*}
% 
% to finish the proof in the general case.\\
\medskip
% \noindent
To show the claim, for any such cube $Q_{\e,z}$ with side length $\e^{1+\delta}$ and $N_1\in \N$ we consider an event
\begin{align*}
\mathcal{A}^\e(Q_{\e,z}):=\{\omega \in \Omega: \text{there are at least $N_1$ points from $\frac\eps 2\Phi(\omega) \cap D$ in $Q_{\e,z}$}\}.
\end{align*}
Recall that for any measurable bounded set $S\subset\R^3$, we denote by $N(S)=\#(S\cap \Phi)$ the number of random points in $S$. Since the points $\Phi$ are Poisson-distributed, we have for any $n\in\N$
\begin{align*}
\P(N(S)=n)=e^{-\lambda |S|}\frac{(\lambda |S|)^n}{n!}.
\end{align*}
Hence, using $|Q_{\eps,z}| = \eps^{(1+\delta)d}$, we get
\begin{align*}
\P(\mathcal{A}^\e(Q_{\eps,z})) &\le \sum_{k = N_1}^\infty \P(N(\frac{2}{\eps}Q_{\eps,z}) = k) = e^{-\frac{\lambda}{(\e/2)^d}|Q_{\e,z}|}\sum_{k=N_1}^\infty \frac{\big(\frac{\lambda}{(\e/2)^d}|Q_{\e,z}|\big)^k}{k!}
\\
&\leq \frac{ \left( \lambda (\eps/2)^{-d} |Q_{\eps,z}|\right)^{N_1}}{N_1!} = \frac{(\lambda2^d)^{N_1}}{N_1!} \e^{\delta d N_1},
\end{align*}
where we used that for every $x > 0$
%Since the points are distributed by a Poisson point process, using
\begin{equation}\label{eq:exp}
e^{-x}\sum_{k\geq n} \frac{x^k}{k!}=\frac{x^n}{n!} e^{-x}\sum_{k\geq 0} \frac{x^kn!}{(n+k)!}\leq \frac{x^n}{n!}e^{-x}\sum_{k\geq 0} \frac{x^k}{k!}=\frac{x^n}{n!}.
\end{equation}
Since the domain $D$ is bounded, we can cover $D$ with less than $C(D) \eps^{-(1+\delta)d}$ disjoint cubes $Q_{\eps,z}$ of the form as above. The previous argument then implies
\begin{align}\label{eq:P(capAe)}
\P\bigg(\!\bigcap_{z} (\mathcal{A}^\e(Q_{\eps,z}))^c\!\bigg)\!=\!\prod_{z} (1-\P(\mathcal{A}^\e(Q_{\eps,z})) \!\geq \bigg(1-\frac{\lambda^{N_1}2^{dN_1}}{N_1!}\e^{\delta d N_1}\bigg)^{C(D)\e^{-d(1+\delta)}},
\end{align}
where $\mathcal A^c:=\Omega\setminus \mathcal A$ denotes the complementary event. The cubes $\{Q_{\eps,z}\}_{z \in \Z^d}$ are disjoint and so the events $\{\mathcal{A}^\e(Q_{\eps,z})\}_{z}$ as well as their complements are independent, what we used above. 
Choosing $N_1:=2+\big\lceil \frac1\delta\big\rceil$ we see that $N_1>(1+\frac1\delta)$ so that $\delta d N_1 - d(1+\delta) > 0$. 
% , in particular we have $\lim_{\e\to 0} \e^{\delta d N_1}\e^{-C(D)d(1+\delta)}=0$ and hence
% \begin{align*}
% \lim_{\e \to 0} \P\bigg(\bigcap_{z} (\mathcal{A}^\e(Q_{\eps,z}))^c\bigg) \le \limsup_{\e \to 0} \bigg(1-\frac{\lambda^{N_1}}{N_1!}\e^{\delta d N_1}\bigg)^{\e^{-C(D)d(1+\delta)}} = 0.
% \end{align*}
%
For $l \in \N$ let 
\begin{align*}
B_l:=\bigcup_{z} \mathcal{A}^{2^{-l}}(Q_{2^{-l},z})=\{\omega \in \Omega : &\textrm{ one of the dyadic cubes $Q_{2^{-l},z}$ contains}\\
&\textrm{ at least $N_1$ points from $2^{-l-1} \Phi \cap D$}\}
\end{align*}
and observe that \eqref{eq:P(capAe)} implies with $\eps = 2^{-l}$
\begin{equation}
\begin{aligned}\label{eq:P(Be)}
\P(B_l)&=1-\P\big(B_l^c\big)=1-\P\bigg(\bigcap_{z} (\mathcal{A}^{2^{-l}}(Q_{2^{-l},z}))^c\bigg)
\\
&\leq 1-\big(1-C_1 2^{-ld\delta N_1}\big)^{C(D)2^{ld(1+\delta)}},
\end{aligned}
\end{equation}
where $C_1(\lambda,d,N_1)$ is the fraction appearing on the right-hand side of \eqref{eq:P(capAe)}. 
%For abbreviation and legibility, from now on, we set
% \begin{align*}
% p:=2^{-d\delta N_1}<1<2^{C(D)d(1+\delta)}=:q,
% \end{align*}
% which yields
% \begin{align*}
% \P(B_l)\leq 1-(1-C_1p^l)^{q^{l}}.
% \end{align*}

To apply the Borel-Cantelli lemma, it is enough to show $\sum_{l=0}^\infty \P(B_l) < \infty$. 
% , which would follow from 
% \begin{align*}
% \sum_{l=0}^\infty \bigg[1-\big(1-C_1p^l\big)^{q^{l}}\bigg]<\infty.
% \end{align*}
% By definition of $N_1$, we have
% \begin{align*}
% p\leq 2^{-d(1+2\delta)}=\frac{1}{q2^{d\delta}},
% \end{align*}
Using Bernoulli's inequality $(1-x)^t\geq 1-tx$, which holds since $x = C_12^{-ld\delta N_1} \le 1$ for $l\ge l_0$ for large enough $l_0 \in \N$, we see
\begin{align*}
\sum_{l=l_0}^\infty \P(B_l) &\le \sum_{l=l_0}^\infty \bigg[1-\big(1-C_1 2^{-ld\delta N_1}\big)^{C(D)2^{ld(1+\delta)}}\bigg] 
\\
&\leq \sum_{l=l_0}^\infty C_1 C(D) 2^{-ld(\delta N_1 - (1+\delta))} < \infty,
\end{align*}
where in the last inequality we used that by definition of $N_1$ we have $\delta N_1 - (1+\delta) > 0$. The Borel-Cantelli lemma implies
\begin{align*}
\P\big(\limsup_{l\to\infty}B_l\big)=0,
\end{align*}
meaning that almost surely there is an $\e_0(\omega)>0$ such that for all $0<\e_l=2^{-l}\leq\e_0$, any cube $Q_{2^{-l},z}$ contains not more than $N_1$ points from $\frac \eps 2 \Phi \cap D$, thus proving the claim. 

\medskip
To show the general case, for $\omega \in \Omega$ we consider $\eps_0(\omega)$ coming from the claim. W.l.o.g.~we assume $\eps_0 = 2^{-l_0}$ for some $l_0 \in \N$ (otherwise replace $\eps_0$ with the largest smaller power of $2$). To finish the proof we need to show that for any $0 < \eps \le \eps_0$, any cube $Q_\eps = x + [0,\eps^{1+\delta}]^d$, $x \in \R^d$, contains at most $N$ points from $D \cap \eps \Phi(\omega)$. Let $0 < \eps < \eps_0$ and $Q_\e = x + [0,\eps^{1+\delta}]^d$ be chosen arbitrary, and let $N := 2^d N_1$. Let $l \ge l_0$ be the unique $l$ such that $2^{-(l+1)} \le \eps < 2^{-l}$. 

Observe that for $\lambda > 0$ we have $\# (Q_\e \cap \eps \Phi) = \# (\lambda Q_\e \cap \lambda \eps \Phi)$, where $\lambda Q_\e = \{ \lambda x : x \in Q_\e \}$, 
which together with star-shapedness of $D$ yields for $\lambda = \frac{2^{-(l+1)}}{\eps} \in (0,1]$
\begin{equation*}
\# (Q_\e \cap \eps \Phi \cap D) = \# (\lambda Q_\e \cap \lambda \eps \Phi \cap \lambda D) \le \# (\lambda Q_\e \cap 2^{-(l+1)} \Phi \cap D).
\end{equation*}
We now cover $\lambda Q_\eps$ with (at most) $2^d$ cubes $Q_{2^{-l},z}$. Observe that even if $\lambda Q_\eps$ is closed and $Q_{2^{-l},z}$ are only half-closed, the covering is possible since $\lambda \eps = 2^{-(l+1)} < 2^{-l}$. In particular, the claim implies that any $Q_{2^{-l},z}$ contains at most $N_1$ points from $\frac{2^{-l}}{2}\Phi \cap D$, thus implying that $\lambda Q_\eps$, being covered by at most $2^d$ cubes $Q_{2^{-l},z}$, contains at most $2^d N_1$ points from $2^{-(l+1)}\Phi \cap D$. This together with the last display implies $\# (Q_\e \cap \eps \Phi \cap D) \le 2^d N_1 = N$, thus concluding the proof of the proposition. 
\end{proof}

The first part of Theorem~\ref{thm:MainProbThm} is based on the following Strong Law of Large Numbers, which previously appeared as \cite[Lemma C.1]{GiuntiHoefer19}:

\begin{lemma}\label{lem:SLLN}
Let $d\geq 1$ and $(\Phi,\mathcal{R})=(\{z_j\},\{r_j\})$ be a marked Poisson point process with intensity $\lambda>0$. Assume that the marks $\{r_j\}$ are non-negative i.i.d.~random variables independent of $\Phi$ such that ${\E(r_j^m) <\infty}$ for some $m > 0$. Then, for every bounded set $S\subset\R^d$ which is star-shaped with respect to the origin, we have almost surely
\begin{align*}
\lim_{\e\to 0} \e^d N(\e^{-1} S)=\lambda |S|, \qquad \lim_{\e\to 0} \e^d \hspace{-0.3em} \sum_{z_j \in \eps^{-1} S} r_j^m=\lambda \E( r^m)|S|.
\end{align*}
%\footnote{change in the second formula in the sum}
\end{lemma}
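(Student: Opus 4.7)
The plan is to reparametrize by $\tau := \e^{-d}$ and identify $M(\tau) := N(\e^{-1}S)$ as a standard Poisson process of rate $\lambda|S|$ on $[0,\infty)$. Both limits then follow from classical SLLN results, and the role of star-shapedness is precisely to make this identification possible.

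\emph{Step 1 (Poisson process in $\tau$).} For $\e_1 > \e_2 > 0$, star-shapedness of $S$ yields the monotone nesting $\e_1^{-1}S \subset \e_2^{-1}S$, with $|\e_2^{-1}S \setminus \e_1^{-1}S| = (\tau_2 - \tau_1)|S|$. By the defining independence of the Poisson point process on disjoint measurable sets, the increments of $M$ are independent, and
\[
M(\tau_2) - M(\tau_1) = N\!\left(\e_2^{-1}S \setminus \e_1^{-1}S\right) \sim \mathrm{Poisson}\!\left(\lambda|S|(\tau_2 - \tau_1)\right),
\]
so $M$ is a Poisson process of rate $\lambda|S|$ on $[0,\infty)$.

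\emph{Step 2 (Two SLLN's).} Applying Kolmogorov's SLLN to the i.i.d.~unit increments $M(n)-M(n-1) \sim \mathrm{Poisson}(\lambda|S|)$ gives $M(n)/n \to \lambda|S|$ a.s., and monotonicity of $M$ extends this to continuous $\tau$. Rewritten, $\e^d N(\e^{-1}S) = M(\tau)/\tau \to \lambda|S|$ a.s., which is the first limit. For the second, set $\widetilde M(\tau) := \sum_{z_j \in \e^{-1}S} r_j^m$ and enumerate the points entering $\e^{-1}S$ by their entry times $\tau_j^*$. Since the marks $\{r_j\}$ are i.i.d.~and independent of $\Phi$, $\widetilde M(\tau) = \sum_{i=1}^{M(\tau)} R_i$ where $(R_i)$ is i.i.d.~with $R_i \stackrel{d}{=} r^m$. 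Then
\[
\frac{\widetilde M(\tau)}{\tau} = \frac{\widetilde M(\tau)}{M(\tau)} \cdot \frac{M(\tau)}{\tau} \;\xrightarrow{\tau \to \infty}\; \E(r^m) \cdot \lambda|S| \quad \text{a.s.,}
\]
where the second factor converges by the previous step and the first by the classical i.i.d.~SLLN (which needs only $\E(r^m) < \infty$ and applies since $M(\tau) \to \infty$ a.s.).

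\emph{Main obstacle.} The only non-textbook ingredient is the Poisson-process identification of Step~1, whose key input is the monotone nesting of $\{\e^{-1}S\}_\e$ guaranteed by star-shapedness. Without it, the shells $\e_2^{-1}S \setminus \e_1^{-1}S$ need not be disjoint, independent Poisson increments would fail, and one would be forced to control the possibly irregular boundary of $S$ (which for merely bounded $S$ can even have positive Lebesgue measure). The star-shapedness hypothesis cleanly sidesteps this and reduces the lemma to two standard SLLNs.
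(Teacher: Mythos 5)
Your proof is correct, and it is in fact more self-contained than the paper: the paper does not prove Lemma~\ref{lem:SLLN} at all but simply cites it as \cite[Lemma~C.1]{GiuntiHoefer19}. Your reduction is clean: star-shapedness gives the monotone nesting $\e_1^{-1}S\subset\e_2^{-1}S$ (for $\e_1>\e_2$), which is exactly what lets you view $\tau\mapsto N(\tau^{1/d}S)$ as a one-dimensional Poisson process of rate $\lambda|S|$ in the variable $\tau=\e^{-d}$, and both limits then drop out of two textbook strong laws (one for the i.i.d.~Poisson increments, one for the i.i.d.~marks re-indexed by their $\Phi$-measurable entry times, which remain i.i.d.~and independent of $M(\tau)$ conditionally on $\Phi$). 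Two tiny remarks, neither of which harms the argument: you implicitly use that entry times can be totally ordered a.s.~(ties have probability zero for a PPP, or break them arbitrarily by a $\Phi$-measurable rule), and the second SLLN as written presupposes $M(\tau)\to\infty$, which needs $|S|>0$; the degenerate case $|S|=0$ is trivial since then $N(\e^{-1}S)=0$ for all $\e$ a.s.~and both sides vanish. Your closing observation about the role of star-shapedness is exactly right: without nesting, the ``shells'' would not be disjoint and the Poisson-process identification would fail.
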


\begin{rem}\label{rmk:SLLN} 
 Assuming the boundary of the set $S$ from the previous lemma is not too large, the same argument also shows
\begin{equation}\label{eq:ergodic}
\lim_{\e\to 0} \e^d \hspace{-0.3em} \sum_{z_j \in \Phi^{\eps}(S)} r_j^m=\lambda \E( r^m)|S|.
\end{equation}
In particular, it is enough that $S$ has as $D$ a $C^2$-boundary.  
\end{rem}

Using this remark as well as Proposition~\ref{prop:cubeN} we can prove Theorem~\ref{thm:MainProbThm}:  

\begin{proof}[Proof of Theorem~\ref{thm:MainProbThm}.]
{\bf Part (1):} We start with the first part of the theorem, which actually holds for any dimension $d \ge 1$, $\alpha > 2$, $m > \frac{d}{\alpha - 2}$, and {\color{red}$\kappa \in (1,\alpha - 1 - \frac d m)$.}

%By Lemma~\ref{lem:SLLN} 
Using~\eqref{eq:ergodic} and the choice of $\kappa$, we have for almost all $\omega$ %and $\e=\e(\omega)>0$ small enough
\begin{align*}
\limsup_{\eps \to 0} \eps^{\frac dm} \max_{z_i\in\Phi^\e(D)} r_i \leq \limsup_{\eps \to 0} \e^\frac{d}{m} \biggl( \sum_{z_i\in\Phi^\e(D)} r_i^m \biggr)^{\frac 1m} \leq [\lambda \E(r^m)|D|]^{\frac1m}. %\leq C \e^{1+\kappa}.
\end{align*}
{\color{red}This implies for $\e>0$ small enough
\begin{equation}
 \max_{z_i\in\Phi^\e(D)} \tau \e^\alpha r_i \leq 2\tau \e^{\alpha-\frac{d}{m}}[\lambda \E(r^m)|D|]^{\frac1m} \le \eps^{1+\kappa},
\end{equation}
the last inequality coming from $\alpha - \frac{d}{m} > \kappa+1$, and therefore being true for $\eps$ possibly even smaller.}

%This estimate immediately shows that for $\kappa<\alpha-1-d/m$ and $\e>0$ small enough, one may choose $C=1$.\\

To show two balls do not intersect we consider an event
\begin{align*}
A_\tau^\e:=\{\omega \in \Omega : \text{ there are 2 intersecting balls in } \{ B_{\tau\e^{1+\kappa}}(\e z)\}_{z \in \Phi^\eps(D)} \}
\end{align*}
and it is enough to show
\begin{align}\label{eq:BorCant1}
\P\bigg(\bigcap_{\e_0>0}\bigcup_{\e\leq\e_0} A_\tau^\e\bigg)=0.
\end{align}
We reduce this to the case of dyadic $\eps$, by showing 
\begin{align}\label{eq:BorCant2}
\P\bigg(\bigcap_{l_0\geq 1}\bigcup_{l\geq l_0} A_{\bar{\tau}}^{\e_l}\bigg)=0,
\end{align}
where $\e_l=2^{-l}$ and $\bar{\tau}=2^{1+\kappa}\tau$. 

Indeed, let $l\in\N$ be such that $\e_{l+1}\leq \e <\e_l$. 
% Then it is clear that
% \begin{align}\label{eq:PhiSubsetPhi_l}
% \Phi^\e(D)\subset\Phi^{\e_{l+1}}(D).
% \end{align}
Now suppose $z_i,z_j\in\Phi^\e(D)$, $z_i \neq z_j$ such that
\begin{align*}
B_{\tau\e^{1+\kappa}}(\e z_i)\cap B_{\tau\e^{1+\kappa}}(\e z_j)\neq\emptyset.
\end{align*}
Then
\begin{align*}
\e_{l+1}|z_i-z_j|\leq \e |z_i-z_j|\leq 2\tau\e^{1+\kappa}\leq 2\tau\e_l^{1+\kappa} = 2 \tau(2\e_{l+1})^{1+\kappa} = 2\cdot 2^{1+\kappa}\tau\e_{l+1}^{1+\kappa},
\end{align*}
which means that %is equivalent to
\begin{align*}
B_{2^{1+\kappa}\tau\e_{l+1}^{1+\kappa}}(\e_{l+1} z_i)\cap B_{2^{1+\kappa}\tau\e_{l+1}^{1+\kappa}}(\e_{l+1} z_j)\neq\emptyset.
\end{align*}
The domain $D$ being star-shaped implies monotonicity of $\Phi^{\eps}(D)$ in $\eps$, in particular $\Phi^\e(D)\subset\Phi^{\e_{l+1}}(D)$, which combined with the previous display yields
%This yields together with \eqref{eq:PhiSubsetPhi_l}
\begin{align*}
A_\tau^\e\subset A_{\bar{\tau}}^{\e_{l+1}},
\end{align*}
thus showing that \eqref{eq:BorCant2} implies \eqref{eq:BorCant1}.

It remains to show~\eqref{eq:BorCant2}. Let $\eps > 0$ and $\tau \ge 1$ be fixed. Observe that if for $z_i,z_j\in\Phi^\e(D)$ we have $B_{\tau\e^{1+\kappa}}(\e z_i)\cap B_{\tau\e^{1+\kappa}}(\e z_j)\neq\emptyset$, then $\e|z_i-z_j|\leq 2\tau\e^{1+\kappa}$ and after simplifying $|z_i-z_j|\leq 2\tau\e^\kappa$. In other words
\begin{align}\label{eq:Asubset}
A_\tau^\e\subset \{\omega \in \Omega : \exists x\in\frac1\e D:\#(\Phi^\e(D)\cap B_{2\tau \e^\kappa}(x))\geq 2\}.
\end{align}

Recall that for $S\subset\R^d$, we denote by $N(S)=\#(S\cap\Phi)$ the random variable providing the number of points of the process which lie inside $S$. Let us also note that the points are distributed according to a Poisson distribution %point process in $\e^{-1}D$ 
with intensity $\lambda>0$. We now recall a basic estimate from~\cite[Proof of Lemma 6.1]{GiuntiHoefer19}%\footnote{There is not Lemma 6.1 in that paper, or?}
: For $0<\eta<1$, define the set of cubes with side length $\eta$ centred at the grid $\eta\Z^d$ by
\begin{align*}
\mathcal{Q}_\eta:=\{y+[-\eta/2,\eta/2]^d:y\in\eta\Z^d\}.
\end{align*}
Since it is not true that any ball of radius $\frac \eta 4$ is contained in one of these cubes, we need to add (finitely many) shifted copies of $\mathcal{Q}$. For that let $S_\eta$ be the vertices of the cube $[0,\eta/2]^d$, i.e.,
\begin{align*}
S_\eta=\{z=(z_1,\dots,z_d)\in\R^d: z_k\in \{0,\eta/2\} \text{ for } k=1,\dots,d\}.
\end{align*}
Observe that for any $x\in\R^d$, there exist $z\in S_\eta$ and a cube $Q\in \mathcal{Q}_\eta$ such that $B_\frac{\eta}{4}(x)\subset z+Q$, which immediately implies
\begin{align*}
\P(\exists x\in\frac1\e D&: N(B_\frac{\eta}{4}(x))\geq 2)\\
&\le \P(\exists Q\in\mathcal{Q}_\eta, z\in S_\eta: (z+Q)\cap\frac1\e D\neq\emptyset, N(z+Q)\geq 2).
\end{align*}
Since $S_\eta$ has $2^d$ elements and the number of cubes $Q\in \mathcal{Q}_\eta$ that intersect $\e^{-1} D$ is bounded by $C(D)(\e\eta)^{-d}$, we use the distribution of Poisson point process to conclude
\begin{align*}
\P(\exists x\in\frac1\e D: N(B_\frac{\eta}{4}(x))\geq 2) &\le \sum_{z \in S_\eta} \sum_{ Q } \P( N(z+Q) \ge 2) 
\\
&\le 
2^d C(D)(\e \eta)^{-d}e^{-\lambda \eta^d}\sum_{k=2}^\infty \frac{(\lambda\eta^d)^k}{k!}
\\
&\leq C(D) 2^d(\e\eta)^{-d}(\lambda\eta^d)^2,
\end{align*}
where the last inequality follows from~\eqref{eq:exp}. Letting $\eta_\e:=8\tau\e^\kappa$, this together with \eqref{eq:Asubset} and the fact that $\#(\Phi^\e(D)\cap S)\leq N(S)$ for any $S\subset\R^d$, yields
\begin{align*}
\P(A_\tau^\e)\leq C(\e^{1+\kappa})^{-d} \e^{2d\kappa}=C\e^{d(\kappa-1)}.
\end{align*}
To show~\eqref{eq:BorCant2} we take a sum over $l$ with $\e=\e_l = 2^{-l}$, which using $\kappa>1$ can be estimated as
\begin{align*}
\sum_{l=0}^\infty \P(A_{\bar{\tau}}^{\e_l})\leq C\sum_{l=0}^\infty 2^{-l d (\kappa-1)}<\infty,
\end{align*}
and~\eqref{eq:BorCant2} follows from direct application of the Borel-Cantelli lemma.

\medskip\noindent
{\bf Part (2): } 
We now turn to the second part of the theorem, i.e., the construction of boxes $I^\e_i$. Fixing $\eps$, the first step is to construct a finite collection $\I = \{ \tilde I_i \}$ of auxiliary boxes such that:
\begin{itemize}
 \item these boxes cover the points, i.e., $\bigcup_{i} \tilde I_i \supset \eps\Phi^{\eps}(D)$,
 \item $\dist_{\infty}(\tilde I_i,\tilde I_j) \ge \frac1{2N} \eps^{1+\delta}$ ,
 \item $s(\tilde I_i) \le \eps^{1+\delta}$, where $s(I)$ of a box $I$ denotes the size of its longest side,
 \item each box $\tilde I_i$ satisfies $|\tilde I_i \cap \eps \Phi^\e(D)| \le N$.
\end{itemize}
Here the crucial condition is the second one, i.e., that the boxes are well-separated. 
% The idea of the proof is to cover the domain $D$ with cubes of size $\e^{1+\delta}$, split these cubes into even smaller ones and from these small cubes build the boxes $I^\e$ by growing them layer by layer. Before we start, recall the definition of $C_\e$ in Theorem \ref{thm:MainProbThm} \ref{lem:finitemanyballs} as
% \begin{align*}
% C_\e=x+\frac12 \e^{1+\delta} [-1,1]^3,
% \end{align*}
% 
% where $\delta=(\alpha-3)/3>0$. For legibility, we will not write the dependence on $\e$ when no ambiguity occurs.\\
% 
% First, we split each $C_\e$ into $(2N)^3$ cubes of side length $l:=(2N)^{-1}\e^{1+\delta}$ and call them ``small cubes''. Further, if $I$ is a box as constructed in a minute, we will not distinguish between the index set $\I$ and the set of boxes $\{I_\nu\}_{\nu\in\I}$. We will show that we can find a collection $\I$ of ``inner'' boxes $I_i$ such that
% \begin{itemize}
%  \item $\dist(I_i,I_j) \ge l$ for all $i\neq j$, %(this condition replaces the role of outer boxes)
%  \item $s(I_i)\! \le\! Nl =\eps^{1+\delta}\!/2$, where $s(I)$ of a box $I$ denotes the size of its longest side (called the \emph{length} of $I$),
%  \item each box $I_i$ satisfies $\#(I_i \cap \eps \Phi^\e(D)) \le N$.
% \end{itemize}
%
Let $l = \frac1{2N}\eps^{1+\delta}$. 
% $\mathcal{C_\eps}$ denote the set of all cubes of the form $x + \frac l 2 (-1,1]^3$ for all $x \in l \mathbb{Z}^3$, i.e. we decompose $\mathbb{R}^3$ into cubes with side length $l$. 
% \medskip
We will grow the boxes $\tilde I$ from the collection $\I$ step by step, starting with cubes of side length $l$. At every moment of this growth process, every box $\tilde I \in \I$ will satisfy the following conditions:
{
\renewcommand{\theenumi}{\roman{enumi}}
\begin{enumerate}
 \item $\tilde I = [a_xl,b_xl) \times [a_yl,b_yl) \times [a_zl,b_zl)$ for some $a_x,b_x,a_y,b_y,a_z,b_z \in \mathbb{Z}$, i.e., each box is a union of many small cubes;\label{first}
 \item for each $a \in [a_x,b_x) \cap \mathbb{Z}$ holds $[al,(a+1)l) \times [a_yl,b_yl) \times [a_zl,b_zl) \cap \eps \Phi^\e(D) \neq \emptyset$, and similarly for $y$ and $z$, i.e., in every slice there is some point from $\eps \Phi^\e(D)$;  \label{second}
 \item $\#(\tilde I \cap \eps\Phi^\e(D)) \le N$. \label{third}
\end{enumerate}
}

\medskip
At the beginning, let $\I$ consist of all cubes $[a_xl,(a_x+1)l)\times[a_yl,(a_y+1)l)\times[a_zl,(a_z+1)l)$ which have a point from $\eps \Phi^\e(D)$ in it. 
Since $D$ is bounded, $\I$ consists of finitely many boxes (cubes). We then repeat the following procedure:

\smallskip
If there exist two different boxes $\tilde I, \tilde J \in \I$ such that $\dist(\tilde I,\tilde J) = 0$, we fix them and merge them together. That means, we remove $\tilde I = [a_xl,b_xl) \times [a_yl,b_yl) \times [a_zl,b_zl)$ and $\tilde J = [a_x'l,b_x'l) \times [a_y'l,b_y'l) \times [a_z'l,b_z'l)$ from $\I$ and add 
\begin{align*}
\tilde K &= [A_xl,B_xl) \times [A_yl,B_yl) \times [A_zl,B_zl)\\
:\!\!&= [(a_x \wedge a_x')l,(b_x \vee b_x')l) \times [(a_y  \wedge a_y')l,(b_y  \vee b_y')l) \times [(a_z  \wedge a_z')l,(b_z  \vee b_z')l)
\end{align*}
to $\I$ instead. Here $\wedge$ and $\vee$ stand as usual for minimum and maximum, respectively. 

First, observe that (\ref{first}) trivially follows from the definition of $\tilde K$. Next, to verify that $\tilde K$ satisfies (\ref{second}), let us fix $i \in \{ x,y,z \}$, and observe that $\dist(\tilde I,\tilde J) = 0$ implies $[a_i,b_i] \cap [a_i',b_i'] \neq \emptyset$. Hence, for any ${a \in [\min(a_i,a_i'),\max(b_i,b_i'))}$ either $a \in [a_i,b_i)$, in which case (\ref{second}) for $\tilde I$ implies (\ref{second}) for $\tilde K$, or $a \in [a_i',b_i')$, in which case (\ref{second}) for $\tilde J$ implies (\ref{second}) for $\tilde K$. 

It remains to argue that $\tilde K$ satisfies also (\ref{third}). Since $\tilde I$ satisfies both (\ref{second}) and (\ref{third}), in particular to each $a \in [a_i,b_i)\cap \Z$ there is assigned at least one point from $\eps \Phi^\e(D)$ and there are at most $N$ such points, it follows that $[a_i,b_i)$ has length at most $Nl$. The same argument applies verbatim to $\tilde J$, and so the union of $[a_i,b_i)$ and $[a_i',b_i')$ has length at most $2Nl$. Hence, each side of $\tilde K$ has length at most $s(\tilde K)\leq 2N  l = 2N \frac1{2N} \eps^{1+\delta} = \eps^{1+\delta}$. In addition $\tilde K$ satisfies (\ref{first}), and so there exists a (closed) cube $Q_{\tilde K}$ with side length $\eps^{1+\delta}$ such that $\tilde K \subset Q_{\tilde K}$. By Theorem \ref{thm:MainProbThm}, the number of points in $Q_{\tilde K}$ is at most $N$, which implies the same for $\tilde K$, i.e.,
\begin{align*}
\#(\tilde K \cap \eps \Phi^\e(D)) \le \#(Q_{\tilde K} \cap \eps \Phi^\e(D)) \le N,
\end{align*} 
which shows (\ref{third}) for $\tilde K$; moreover, since $\tilde K$ also fulfils (\ref{second}), this also shows that $\tilde K$ has length at most $s(\tilde K)\leq Nl$.

Since the collection $\I$ was finite at the beginning, and in each iteration we decrease the number of boxes in $\I$ by one (we remove $\tilde I$ and $\tilde J$ and add $\tilde K$), this process has to terminate. In particular, at the end $\I$ consists of boxes which have positive distance from each other, since otherwise the process would not terminate at this point. Since all boxes in $\I$ satisfy (\ref{first}), this in particular implies that this positive distance has to be at least $l = \frac1{2N} \eps^{1+\delta}$. Moreover, since each box has side length at most $Nl = \frac{1}{2} \eps^{1+\delta}$, and each point in $\e\Phi^{\eps}(D)$ is at least distance $\eps$ to $\partial D$, we see that each box (and actually also its small neighborhood) lies inside $D$.

Using boxes from $\I$ we define boxes $I^\eps_i$: for each auxiliary box $\tilde I_i \in \I$ set $I^\eps_i := \{ x \in \R^3 : \dist_\infty(x,\tilde I_i) \le \frac{1}{8N} \eps^{1+\delta} \}$, and it remains to show that $\{ I^\eps_i \}$ satisfy~\eqref{thm31labela}-\eqref{thm31labele}. First, by the assumption $\kappa > \delta$, and so for small enough $\eps$ we have $\eps^{1+\kappa} \le \frac{1}{16N} \eps^{1+\delta}$. Therefore, by the triangle inequality we have for any $\eps z \in \tilde I_i$ that $\dist_{\infty} (B_{\eps^{1+\kappa}}(\eps z),\partial I^\eps_i) \ge \frac{1}{8N}\eps^{1+\delta} - \eps^{1+\kappa} \ge \frac{1}{16N}\eps^{1+\delta}$, thus~\eqref{thm31labela} and~\eqref{thm31labelc} hold. Since by the construction the auxiliary boxes satisfy $\dist_\infty(\tilde I_i,\tilde I_j) \ge \frac{1}{2N}\eps^{1+\delta}$, and all the points from $\eps \Phi^{\eps}(D)$ are inside these boxes, we see that $I^\eps_i \setminus \tilde I_i$ contains no point from $\eps \Phi^\eps(D)$. Therefore \eqref{third} for $\tilde I_i \in \I$ implies~\eqref{thm31labelb} for $I^\eps_i$. Finally, \eqref{thm31labeld} trivially follows from the definition of $I^\eps_i$ and the separation of elements in $\I$ in form of $\dist_\infty(\tilde I_i,\tilde I_j) \ge \frac{1}{2N} \eps^{1+\delta}$, and \eqref{thm31labele} uses that $I^\eps_i$ consist in each direction of at least one cube and of at most $N$ of them.

\end{proof}

\section{Proofs of Lemma~\ref{lem:O-EpsJohn} and Theorem~\ref{thm:MainBog}}\label{sec:BogOp}

Before proving that a box from which we remove finitely many small well-separated balls is a John domain, let us recall how John domains are defined:\newline
%First, let us recall the definition of John domain as stated in Definition~\ref{def:John}: 
For a constant $c>0$, a domain $U\subset \R^d$ is said to be a $c-$John domain if there exists a point $x_0\in U$ such that for any point $x\in U$ there is a rectifiable path $\Gamma:[0,\ell ]\to U$ which is parametrized by arc length with
\begin{align}\label{eq:JohnProp}
\Gamma(0)=x,\quad \Gamma(\ell)=x_0,\quad \forall t\in [0,\ell]: |\Gamma(t)-x|\leq c\, \dist(\Gamma(t),\d U).
\end{align}

John domains may have fractal boundaries or internal cusps, whereas external cusps are forbidden. For instance, the interior of Koch's snowflake as well as any convex domain are John domains. In the case of bounded domains, there are several equivalent definitions of John domains, see \cite[Section 2.17]{VaisalaJohn}. We state the following characterisation, which is used in \cite[Section 3.1]{Ruzicka}: A bounded domain $U$ is a $c-$John domain in the sense of Definition \ref{def:John} if and only if there is a $c_1(c)>0$ and a point $x_0\in U$ such that any point $x\in U$ can be connected to $x_0$ by a rectifiable path $\Gamma:[0,\ell ]\to U$ which is parametrized by arc length and
\begin{align*}
\bigcup\limits_{t\in [0,\ell]} B\big(\Gamma(t), t/c_1\big)\subset U.
\end{align*}

One way how to prove Lemma~\ref{lem:O-EpsJohn} is inductively by showing, that under some assumption on a ball one can remove it from a John domain while changing the John constant of the domain by at most a fixed factor -- for that we would need to modify arcs which run close to (or through) this removed ball while estimating how much does this change the situation. For a similar argument with small balls replaced with points, see~\cite[Theorem 1.4]{Huang+08}. Assuming this, since we have to remove at most $N$ balls and at the beginning the domain is rectangle with proportional sides, in particular a John domain, this would lead to the conclusion. 

Instead of this we provide a direct constructing argument:

\begin{proof}[Proof of Lemma~\ref{lem:O-EpsJohn}.]
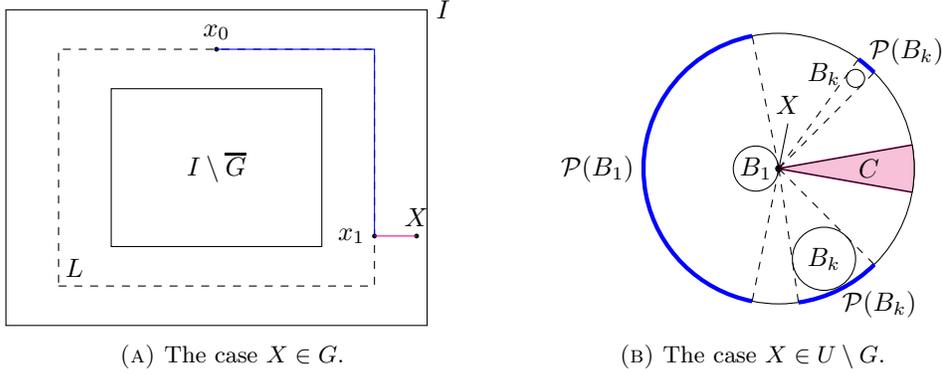
\begin{figure}
\begin{subfigure}[b]{0.45\linewidth}
\centering
\begin{tikzpicture}[scale=0.7]
\draw (-4,-3) rectangle (4,3) node[anchor=west] {$I$};
\draw (-2,-1.5) rectangle(2,1.5);
\node at (0,0) {$I\setminus \overline{G}$};
\draw[dashed] (-3, -2.25) rectangle (3,2.25);
\node at (-2.7,-1.9) {$L$};
\filldraw (3.8,-1.3) circle (1pt) node[anchor=south] {$X$};
\filldraw (3,-1.3) circle (1pt) node[anchor=east] {$x_1$};
\draw[magenta] (3.8,-1.3) -- (3,-1.3);
\filldraw (0,2.25) circle (1pt) node[above] {$x_0$};
\draw[blue] (3,-1.3) -- (3,2.25) -- (0,2.25);
\end{tikzpicture}
\subcaption{The case $X\in G$.\label{fig:XinG}}
\end{subfigure}
\begin{subfigure}[b]{0.45\linewidth}
\centering
\begin{tikzpicture}[scale=0.6]%PB: changed from 0.7 to 0.6
\tkzDefPoint(0,0){X};
\filldraw (X) circle (2pt);
\node at (.2,1) [above]{$X$};
\draw (X)--(.2,1);
\draw (X) circle (3cm);
\draw (-0.51,0) circle (.5cm);
\node at (-.51,0) {$B_1$};
\draw[dashed] (X)--(101.36488:3cm);
\draw[dashed] (X)--(-101.36488:3cm);
\node at (-3,0) [anchor=east] {$\mathcal{P}(B_1)$};
\draw[ultra thick, blue] (101.36488:3cm) arc (101.36488:258.63512:3cm);
\draw (1.7,2) circle (0.2cm);
\node at (1,2.1) {$B_k$};
\draw[dashed] (X)--(45.25654:3cm);
\draw[dashed] (X)--(54.00529:3cm);
\node at (1.8,2.6) [anchor=west] {$\mathcal{P}(B_k)$};
\draw[ultra thick, blue] (45.25654:3cm) arc (45.25654:54.00529:3cm);
\draw (1,-2) circle (0.7cm);
\node at (1,-2) {$B_k$};
\draw[dashed] (X)--(278.32195:3cm);
\draw[dashed] (X)--(314.80816:3cm);
\node at (1.2,-3) [anchor=west] {$\mathcal{P}(B_k)$};
\draw[ultra thick, blue] (278.32195:3cm) arc (278.32195:314.80816:3cm);
\draw[magenta, thick] (X)--(10:3cm);
\draw[magenta, thick] (X)--(-10:3cm);
\filldraw[fill=magenta!30] (X)--(-10:3cm) arc (-10:10:3cm)--cycle;
\node at (2,0) {$C$};
\end{tikzpicture}
\subcaption{The case $X\in U\setminus G$.\label{fig:Cone}}
\end{subfigure}
\caption{(A) The point $X\in G$, first connected to $x_1\in L$ (red) and then to $x_0$ while not leaving $L$ (blue). (B) The projections (blue) of the balls $B_1$ and $B_k$ onto the sphere $S$ with midpoint $X$. The cone $C$ illustrated by the red area hits none of the balls and serves as the ``outgoing'' sector from $X$ to $L$.\label{fig:figure}}
\end{figure}
To start, we use Theorem~\ref{thm:MainProbThm}, part~\eqref{veta31jedna}, twice: once with {\color{red} $\kappa = \kappa_1 := 1+\delta$ and second time with $\kappa = \kappa_2 := \alpha-1-\frac3m-\delta$. Observe that both values of $\kappa$ are within the admissible range $(\max(1,\delta),\alpha-1-\frac3m)$,} and therefore the theorem yields the following:
\noindent
there exists an a.s.~positive $\eps_0(\omega)$, obtained as the smaller of the two $\eps_0$, such that for $0 < \eps \le \eps_0$ holds:
\begin{equation}\label{eq34.1}
 \max_{z_j \in \Phi^{\eps}(D)} \eps^\alpha r_j \le \eps^{1+\kappa_2} \quad \textrm{and} \qquad |\eps z_j - \eps z_k| \ge 2\eps^{1+\kappa_1} \quad \textrm{ for any } z_j, z_k \in \Phi^\eps(D). 
\end{equation}

\providecommand{\w}{\frac{1}{32N}\eps^{1+\delta}}

Assume we have $0 < \eps \le \eps_0$ small enough and recall that we want to show that
\begin{equation*}
U:=I^\eps_i \setminus \bigcup_k B_k %\limits_{z_j\in I_i^\e} B_{\e^\alpha r_j}(\e z_j)
\end{equation*}
is a $c(N)-$John domain in the sense of Definition~\ref{def:John}, where ${\{ B_k \}_k = \{ B_{\e^\alpha r_k}(\e z_k) : \e z_k \in I^{\eps}_i \}}$. For brevity we set $I := I^{\eps}_i = p + (-l_1/2,l_1/2)\times(-l_2/2,l_2/2)\times(-l_3/2,l_3/2) \subset \R^3$, where $p$ is the center and $l_i$ are the side lengths of $I$. Since \eqref{eq:JohnProp} is scale-invariant,we can assume $l_1 \ge l_2 \ge l_3 = 1$. The set
\begin{equation*}
L := \{ x \in I : \dist_\infty(x,\partial I) = \w \} % 2*w will denote the width of the bdry layer
\end{equation*}
will serve as a ``highway'' in the set $U$, and for the specific point $x_0$ from Definition~\ref{def:John} we choose $x_0 := p + (0,0,l_3/2-\w)$. We also denote the ring around $L$ by $G := \{ x \in I : \dist_{\infty}(x,L) < \w \}$. 

To show that $U$ is a John domain, for each $X \in U$ we need to construct a path from $X$ to $x_0$ along which $|\cdot - X| \le c \dist(\cdot,\partial U)$. The idea is first to go from $X$ to $L$, and then run along $L$ to $x_0$. Observe that for points $x \in L$ the condition is easy to satisfy: for each $x \in L$ we have $\dist(x,\partial U) = \dist(x,\partial G) = \w$ and $|x-X| \le \diam(U) \le \sqrt{3}\, l_1$, and so using~\eqref{thm31labele} to see $l_1 \le C(N)\eps^{1+\delta}$ we get that $|x-X| \le c(N) \dist(x,\partial U)$ as required. 

It remains to describe the path from $X$ to $L$. For points $X \in G$ this is straightforward (see Figure \ref{fig:XinG}): we just choose the shortest path from $X$ to $L$ and observe that any point $x$ on that path satisfies
$\dist(x,\partial U) \ge \dist(x,\partial G) \ge 3^{-1/2} |X-x|$. The $\sqrt{3}$ is optimal as can be seen from points in corners. 

%We start with points near the highway $L$. Let $G := \{ x \in \R^3 : \dist_{\infty}(x,L) < w \}$, and observe that $\dist(B_{\eps^\alpha r_j}(\eps z_j),\partial I^\e_i) > 2w$ for all $z_j \in I^\eps_i$ implies $G \subset U$. For $x \in G$, we denote $x_1 \in L$ the closest point from $L$ to $x$. We denote the shortest path from $x$ to $x_1$ by $\Gamma_1$, and the shortest polygonal chain from $x_1$ to $x_0$ by $\Gamma_2$. Since $G \subset U$, for $y \in \Gamma_1$ holds $\dist(y,\partial U) \ge \dist(y,\partial G) \ge 3^{-1/2} |x-y|$. The $\sqrt{3}$ is optimal and as can be seen from points in corners. For $y \in \Gamma_2$ it is enough to use $\dist(y,\partial U) = \dist(y,\partial G) = w$. Indeed, for any $y \in \Gamma_2$ we have $|x-y| \le \sqrt{3}l_1$ as well as $l_1 \le C(N)$ (see~\ref{thm31labele}), which combined with the previous fact yields along $\Gamma_2$ that $\dist(y,\partial U) = w \ge C^{-1} l_1 \ge C |x-y|$. 

In the rest of the proof we deal with the points from the ``interior'' $U \setminus G$. For $X \in U \setminus G$ we need to construct a path from $X$ to $L$, while not going too close near the balls $B_k$. We will use two important properties of these balls: the size of the balls is much smaller than their mutual distance (see~\eqref{eq34.1}), and there are at most $N$ of them. We fix $X \in U\setminus G$ and show that we can actually use a straight line to connect $X$ with $L$. Along this line we should be able to move a growing ball without hitting $\{B_k\}$, what is equivalent to an existence of a cone with an opening $c(N)$ which avoids all the balls. For that let $S$ be a unit sphere centered at $X$, and let $\mathcal P$ denote the orthogonal projection on $S$. Farther, let $P := \mathcal P(\bigcup_k B_k)$ denote the projection of balls on $S$. Observe that if we find a disc on $S$ of fixed radius (depending on $N$) which does not overlap with $P$, then we are done since such disc corresponds to a cone at $X$ avoiding the balls $B_k$ (see Figure \ref{fig:Cone}). 

Hence, we reduced our task to a problem of finding a not too small disc in $S \setminus P$, with $P$ being a union of at most $N$ discs with some additional properties. First, it can happen that $X$ lies very close to one of the balls, so that the projection of this particular ball on $S$ covers (almost) half of the sphere $S$. For this reason w.l.o.g.~let $B_1$ denote the ball whose center is closest to $X$, which we treat separately: let $S' \subset S$ be a half-sphere with the pole being exactly opposite to the center of $\mathcal P(B_1)$, in particular $\mathcal P(B_1)$ and $S'$ are disjoint. Since $B_1$ was the closest ball to $X$, it follows from the second estimate in~\eqref{eq34.1} that $X$ is at least $\eps^{1+\kappa_1}$ away from centers of the remaining balls $B_k, k \ge 2$. 
On the other hand, the first relation in~\eqref{eq34.1} bounds the radii of these balls with $\eps^{1+\kappa_2}$. Therefore, the projections of these remaining balls are discs of radius at most $C\frac{\eps^{1+\kappa_2}}{\eps^{1+\kappa_1}} = C\eps^{\kappa_2 - \kappa_1}$. {\color{red}Since $\kappa_2 - \kappa_1 = \alpha-2-\frac3m-2\delta>0$ by the choice of $\delta$ in \eqref{eq:DefDelta},} we see that for $\eps$ small these (at most $N-1$) projections are tiny discs (almost points). We can now find a radius $r=r(N)$ with the following property: there exist $N$ discs $D_1,\ldots,D_N$ of radius $r$ in $S'$ such that the distance between any two discs is at least $r$ as well. One option is to arrange them along the boundary of $S'$ with necessary spacing between them, thus achieving $r \sim N^{-1}$. Provided now $\eps$ is small enough so that the radii of $\mathcal P(B_k)$, which are bounded by $C \eps^{\kappa_2 - \kappa_1}$, are smaller than $r$, we are done: there are $N$ discs $D_1,\ldots,D_N$ and at most $N-1$ projections $\mathcal P(B_k)$ where each projection can touch at most one $D_k$, so that one disc will not overlap with any of the projections $\mathcal P(B_k)$, thus defining the cone we are searching for.

This solution to the last question is naturally far from optimal (in $r$): consider a well-studied question of finding an optimal cover of a sphere (more precisely half of it) with $N$ identical discs of smallest radius. If $\rho$ denotes the smallest such radius, then for any configuration of $N$ points in $S'$ there exists a disc in $S'$ of radius $\rho$ which avoids them, thus also providing a solution to our problem \cite{Toth1949}.

\end{proof}

Since the perforated boxes $U$ from Lemma~\ref{lem:O-EpsJohn} are uniform John domains, in particular we have a Bogovki\u{\i} operator on each $U$, Theorem~\ref{thm:MainBog} can be proved along the lines of the proof in~\cite{DieningFeireislLu}. First, using a Bogovski\u{\i} operator on the whole $D$ we obtain a function $\vb u$ with the correct divergence, but which naturally does not vanish on the holes. To achieve that, we modify $\vb u$ in each box $I^{\eps}_i$. More precisely, near $\partial I^\eps_i$ in a boundary layer of size $\frac{1}{16N}\eps^{1+\delta}$ we change $\vb u$ to its average value over this layer, and then inside the box (where also the balls are removed) cut off this constant function near each hole over a scale $\eps^\alpha$. Since by this modification we also change the divergence of the function, we employ Bogovski\u{\i}'s operator both on each box as well as near each hole to fix the divergence. 

% \subsection{Proof of Theorem \ref{thm:MainBog}}
\begin{proof}[Proof of Theorem~\ref{thm:MainBog}]

Let us recall definition of $D_\e=D\setminus\bigcup_{z_j\in\Phi^\e(D)} B_{\e^\alpha r_j}(\e z_j)$ (see~\eqref{def:Domain}). 
To prove the theorem we construct a linear operator of Bogovski\u{\i} type, bounded independently of $\eps$:
\begin{align*}
\B_\e:L_0^q(D_\e)\to W_0^{1,q}(D_\e)
\end{align*}
satisfying
\begin{align}\label{eq:BogIt2}
\div\B_\e(f)=f \text{ in } D_\e,\quad ||\B_\e(f)||_{W_0^{1,q}(D_\e)}\leq C\, ||f||_{L_0^q(D_\e)}.
\end{align}

For $1<q<\infty$ and $f\in L_0^q(D_\e)$ we denote by $\tilde{f}\in L_0^q(D)$ its zero extension in the holes. Using classical Bogovski\u{\i}'s operator in Lipschitz domain $D$~\cite[Chapter 3]{Galdi2011}, norm of which depends on the Lipschitz character of $D$, we can find a function $\vb u=\B_D(\tilde{f})\in W_0^{1,q}(D)$ satisfying
\begin{align*}
\div \vb u =\tilde{f}\text{ in } D,\quad \|\vb u\|_{W_0^{1,q}(D)}\leq C\, \|\tilde{f}\|_{L_0^q(D)}=C\, \|f\|_{L_0^q(D_\e)}
\end{align*}
with $C=C(D,q)$. 
% We recall the definition of $O^\e$ and $I^\e$ from Theorem \ref{thm:MainProbThm} \ref{lem:GroupingBalls} to write
% \begin{align*}
% A^\e:=O^\e\setminus \overline{I^\e}.
% \end{align*}

\providecommand{\ii}{I^{\eps,\textrm{in}}_i}

{\color{red}Since $\alpha-3/m>2$, by applying} Theorem~\ref{thm:MainProbThm} we find for every $\eps > 0$ small enough a finite collection of boxes $I^{\eps}_i$ such that for any point $z_j \in \Phi^\eps(D)$ there is $i$ such that 
\begin{equation*}
B_{\e^{\alpha} r_j }(\e z_j)\subset B_{2 \e^{\alpha} r_j}(\e z_j)\subset B_{\eps^{1+\kappa}}(\e z_j) \subset
I^{\e,\textrm{in}}_i,
\end{equation*}
where
\begin{align*}
I^{\eps,\textrm{in}}_i &:= \{ x \in I^{\e}_i : \dist_\infty(x,\partial I^\eps_i) \ge \frac{1}{16N}\eps^{1+\delta} \}.
\end{align*}
For any box $I^\eps_i$ and any ball $B_{\eps^\alpha r_j}(\e z_j)$ consider the corresponding cut-off functions:
%consider two cut-off functions $\chi_\e$ and $\zeta_{\e,i}$ which satisfy
\begin{align}
&\chi_{\eps,i} \in C_c^\infty (I^\eps_i),\quad \chi_{\e,i} \restriction_{\ii}=1,\quad \|\nabla \chi_{\e,i}\|_{L^\infty(D)}\lesssim \e^{-(1+\delta)},\label{def:chi}\\
&\zeta_{\e,j}\in C_c^\infty \Bigl(B_{2 \e^\alpha r_j}(\e z_j)\Bigr),\quad \zeta_{\e,j}\restriction_{B_{\e^\alpha r_j}(\e z_j)}=1,\quad {\color{red}\|\nabla \zeta_{\e,j}\|_{L^\infty(B_{2\e^\alpha r_j}(\e z_j))}\lesssim \frac{1}{r_j}\e^{-\alpha}.}\label{def:zeta}
\end{align}%
Further we denote the mean value of $\vb u$ over a measurable set $S\subset\R^3$ by
\begin{align*}
\langle \vb u\rangle_S:=\frac{1}{|S|}\int_S \vb u
\end{align*}
and define
\begin{align*}
A^{\eps}_i &:= I^{\eps}_i \setminus I^{\eps,\textrm{in}}_i = \{ x \in I^{\eps}_i : \dist_\infty(x,\partial I^{\eps}_i) < \frac{1}{16N}\eps^{1+\delta} \},\\
\vb b_{\e,i}(\vb u)&:=\chi_{\e,i} (\vb u-\langle \vb u\rangle_{A^\e_i})\in W_0^{1,q} (I^\eps_i),\\
\beta_{\e,j}(\vb u)&:=\zeta_{\e,j}\, \langle \vb u\rangle_{A^\e_i}\in W_0^{1,q} \Bigl(B_{2 \e^\alpha r_j}(\e z_j)\Bigr),
\end{align*}
where as before $i$ and $j$ are related through $\eps z_j \in I^\e_i$. 

Since all the lengths in the set $A^\e_i$ are proportional to $\eps^{1+\delta}$ (with the proportionality depending on $N$), Poincar\'e's inequality implies 
\begin{align*}
\|\vb u-\langle \vb u\rangle_{A^\e_i}\|_{L^q(A^\e_i)} \lesssim \e^{1+\delta}\, \|\nabla \vb u\|_{L^q(A^\e_i)},
\end{align*}
and by \eqref{def:chi} we get
\begin{align}\label{eq:estimateB(u)}
\begin{split}
\|\nabla \vb b_{\e,i}(\vb u)\|_{L^q(A^\e_i)}&\leq \|\chi_{\e,i} \nabla (\vb u-\langle \vb u\rangle_{A_i^\e})\|_{L^q(A_i^\e)}+\|\nabla \chi_\e (\vb u-\langle \vb u\rangle_{A_i^\e})\|_{L^q(A_i^\e)}\\
&\lesssim \|\nabla (\vb u-\langle \vb u\rangle_{A^\e_i})\|_{L^q(A^\e_i)}+\e^{-(1+\delta)}\, \|\vb u-\langle \vb u\rangle_{A^\e_i}\|_{L^q(A^\e_i)}\\
&\lesssim \|\nabla \vb u\|_{L^q(A^\e_i)}.
\end{split}
\end{align}
{\color{red}Similarly, by \eqref{def:zeta} and Jensen's inequality, we get
\begin{align}\label{eq:estimateBeta(u)}
\begin{split}
\|\nabla \beta_{\e,j}(\vb u)\|_{L^q(B_{2 \e^\alpha r_j}(\e z_j))} &=\|\nabla\zeta_{\e,j}\cdot \langle \vb u\rangle_{A^\e_i}\|_{L^q(B_{2 \e^\alpha r_j}(\e z_j))}\\
&\lesssim r_j^{\frac3q-1}\e^{\big(\frac3q-1\big)\alpha}\, |\langle \vb u\rangle_{A^\e_i}| \lesssim r_j^{\frac3q-1}\e^{\big(\frac3q-1\big)\alpha}\, |A^\e_i|^{-\frac1q} \|\vb u\|_{L^q(A^\e_i)}\\
&\lesssim r_j^{\frac3q-1}\e^{\big(\frac3q-1\big)\alpha-\frac{3(1+\delta)}{q}}\, \|\vb u\|_{L^q(A^\e_i)}.
\end{split}
\end{align}
%
%Recall the definitions of $m$, $\delta$ and $\kappa_2$ as
%\begin{align*}
%m>\frac{3}{\alpha-3},\quad \delta=\frac{\alpha-3(1+\frac1m)}{4}>0,\quad \kappa_2=\alpha-1-\frac3m-\delta.
%\end{align*}
%
Since $B_{\e^\alpha r_j}(\e z_j)\subset D$, we have $r_j\leq \e^{1+\kappa_2-\alpha}=\e^{-(\frac3m+\delta)}$ by \eqref{eq34.1} and the choice of $\kappa_2=\alpha-1-\frac3m-\delta$. This yields
\begin{align*}
r_j^{\frac3q-1}\e^{\big(\frac3q-1\big)\alpha-\frac{3(1+\delta)}{q}}\leq \e^{(\frac3q-1)(\alpha-\frac3m-\delta)-\frac3q (1+\delta)}.
\end{align*}
Thus, by choosing $\delta$ such that
\begin{align}\label{DeltaJedna}
\delta\leq\frac{(3-q)(\alpha-\frac3m)-3}{6-q},
\end{align}
we get $\delta>0$ and uniform bounds on $\|\beta(\vb u)\|_{L^q(B_{2\e^\alpha r_j}(\e z_j))}$ for all $1<q<3$ which satisfy \eqref{Conditionm}.}

Since $\beta_{\e,i}$ as well as $\vb b_{\e,j}$ do not have vanishing divergence, we need to correct them using Bogovki\u{\i} operators on perforations of $B_{2\eps^\alpha r_j}(\eps z_j)$ and $I^\e_i$, respectively. In the first case one can construct the Bogovki\u{\i} operator
\begin{equation}
\widetilde \B_{\e,j}:L_0^q\big(B_{2 \e^\alpha r_j}(\e z_j)\setminus B_{\e^\alpha r_j}(\e z_j)\big) \to W_0^{1,q}\big(B_{2 \e^\alpha r_j}(\e z_j)\setminus B_{\e^\alpha r_j}(\e z_j)\big) 
\end{equation}
as in \cite[Chapter III and Theorem III.3.1]{Galdi2011}, which mimics the original proof from Bogovski\u{\i} in \cite{Bogovskii80}. Alternatively, one can also construct it by observing that $B_{2 \e^\alpha r_j}(\e z_j)\setminus B_{\e^\alpha r_j}(\e z_j)$ is a uniform John domain independent of $\e$ and $z_j$ and use \cite[Theorem 3.8 and Theorem 5.2]{Ruzicka}. In the second situation, the existence of the Bogovski\u{\i} operator $\B_{\eps,i}$ for the set $I^\eps_i \setminus \bigcup_{\eps z_j \in I^{\eps}_i} B_{\eps^{\alpha}r_j}(\eps z_j)$ is content of Lemma~\ref{lem:O-EpsJohn}, {\color{red}provided we choose $\delta$ from \eqref{DeltaJedna} possibly even smaller to satisfy also \eqref{eq:DefDelta}}. We are now ready to define the restriction operator from $D$ to $D_\eps$ via 
\begin{align*}
R_\e(\vb u)\!:=\!\vb u-\!\!\!\!\!\!\!\sum_{z_j\in\Phi^\e(D)}\!\!\!\! (\beta_{\e,j}(\vb u)-\widetilde \B_{\e,j}(\div \beta_{\e,j}(\vb u)))\!-\!\sum_{i} (\vb b_{\e,i}(\vb u)\!-\!\B_{\e,i}(\div \vb b_{\e,i}(\vb u))),
\end{align*}
where the last sum runs over all boxes $I^\e_i$ and the functions were extended by $0$ outside their domain of definition. 
%nd $\B_{\e,i}(\div \beta_{\e,i}(u))$ and $\B_{\e,O^\e}(\div b_\e(u))$ has been extended by zero outside $B_{\tau r_i\e^\alpha}(\e r_i)$ and $O^\e$, respectively. 
This definition is essentially the same as in \cite{DieningFeireislLu}; note that we just replaced their operators $\B_{E_{\e,\! n}}$ by our operators $\B_{\e,i}$. Repeating the arguments shown in \cite[Section 3]{DieningFeireislLu}, by \eqref{eq:estimateB(u)}, \eqref{eq:estimateBeta(u)} and the fact that the boxes $I^\eps_i$ are disjoint, %two boxes $O^\e$ and $\widetilde{O^\e}$ do not overlap, 
we see that $R_\e(\vb u) \in W_0^{1,q}(D_\e)$ is well defined and satisfies{\color{red}
\begin{align*}
%R_\e(\vb u)\in W_0^{1,q}(D_\e),\quad 
\div R_\e(\vb u)=f \text{ in } D_\e,\quad \|R_\e(\vb u)\|_{W_0^{1,q}(D_\e)}\leq C\, \bigg(\e^{(\frac3q-1)(\alpha-\frac3m-\delta)-\frac3q (1+\delta)}+1\bigg)\|\vb u\|_{W_0^{1,q}(D)},
\end{align*}
where the constant $C>0$ is independent of $\e>0$. Note that due to the choice of $\delta$, the exponent of $\e$ on the right hand-site is non-negative, so we may bound $R_\e$ uniformly w.r.t.~$\e$. For $f\in L_0^q(D_\e)$ we define
\begin{align*}
\B_\e(f):=(R_\e\circ \B_D)(\tilde{f})
\end{align*}
and observe that we get the desired operator, namely that $\B_\e(f)\in W_0^{1,q}(D_\e)$, 
\begin{align*}
\div \B_\e(f)=f \text{ in } D_\e,\quad \textrm{and } \|\B_\e(f)\|_{W_0^{1,q}(D_\e)}\leq C\, \|f\|_{L^q(D_\e)}.
\end{align*}}%
This finishes the proof of Theorem \ref{thm:MainBog}.
\end{proof}

\section{Application to the Navier-Stokes equations}\label{sec:ApplNSE}

In this section, we will show the homogenization result for Navier-Stokes equations in a randomly perforated domain in the subcritical case $\alpha>3$. % and for the adiabatic exponent $\gamma>3$. 
The proof of such result in the case of periodically arranged holes was developed in a series of works~\cite{DieningFeireislLu,FeireislLu,LuSchwarzacher}, and can be split in two parts. First, using Bogovski\u{\i} operator we construct a good test function for the momentum equation, which leads to uniform in $\e$ estimates on the density as well as the velocity, subsequently providing the compactness. To identify the limiting ``effective'' equation, we need to construct a suitable cut-off function in order to compare the limiting equation with the equation in $D_\eps$. Since the rest of the proof does not refer in any way to location or structure of the holes, in particular it applies verbatim in our context, for that remaining part of the proof we only sketch the main steps. To shorten the exposition we sketch the argument only in the stationary case, following \cite{FeireislLu}. An analogous homogenization result holds also in the time-dependent setting -- see the statement and the proof of~\cite[Theorem 1.6]{LuSchwarzacher}.

\subsection{Test functions}

Before we formulate and show the homogenization result, we prove a modification of~\cite[Lemma 2.1]{LuSchwarzacher} in the random setting as the last ingredient in the proof of Theorem~\ref{thm:main}, which makes a reference to the randomness in the structure of the holes:
{\color{red}
\begin{lemma}\label{lm:cutoff}
Let $\alpha > 2$, $D\subset\R^3$ be a bounded $C^2$ star-shaped domain with $0 \in D$, and $(\Phi,\mathcal{R})=(\{z_i\},\{r_i\})$ be a marked Poisson point process with intensity $\lambda > 0$ and $r_i \ge 0$ with {\color{red}$\mathbb{E}(r_i^M) < \infty$ for $M=\max\{3,m\}$, where $m>3/(\alpha-2)$.} Then for any $1 < r < 3$ such that $(3-r)\alpha - 3 > 0$ and for almost every $\omega$ there exist a positive $\e_0(\omega)$ and a family of functions $\{ g_\eps\}_{\eps > 0} \subset W^{1,r}(D)$ such that for $0 < \e \leq \e_0$,
 \begin{equation}
  g_\e = 0 \quad \textrm{ in } \bigcup_{z_j \in \Phi^\eps(D)} B_{\eps^\alpha r_j}(\eps z_j), \qquad g_\e \to 1 \quad \textrm{ in } W^{1,r}(D) \textrm{ as } \eps \to 0
 \end{equation}
 and there is a constant $C>0$ such that
 \begin{equation}
  \| g_\e - 1 \|_{W^{1,r}(D)} \le C\e^{\sigma} \qquad \textrm{ with } \sigma := ((3-r)\alpha-3)/r. 
 \end{equation}
\end{lemma}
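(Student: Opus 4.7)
The natural candidate is
\begin{equation*}
 g_\eps := 1 - \sum_{z_j \in \Phi^\eps(D)} \phi_{\eps,j},
\end{equation*}
where each $\phi_{\eps,j} \in C^\infty_c(\R^3)$ is a standard radial cut-off with $\phi_{\eps,j} \equiv 1$ on $B_{\eps^\alpha r_j}(\eps z_j)$, $\phi_{\eps,j} \equiv 0$ outside $B_{2\eps^\alpha r_j}(\eps z_j)$, and $\|\nabla \phi_{\eps,j}\|_{L^\infty} \lesssim (\eps^\alpha r_j)^{-1}$. By Theorem~\ref{thm:MainProbThm}~\eqref{veta31jedna} applied with $\tau = 2$ and some $\kappa \in (\max(1,\delta),\alpha-2)$ we have, for $\eps$ sufficiently small (depending on $\omega$), that the enlarged balls $B_{2\eps^\alpha r_j}(\eps z_j)$ are pairwise disjoint and contained in $D$ (since $\mathrm{dist}(\eps z_j,\partial D) > \eps \gg \eps^\alpha r_j$), so the sum is locally finite and $g_\eps$ is well-defined in $W^{1,r}(D)$ with $g_\eps = 0$ on every hole.

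The trivial scaling estimate $\|\phi_{\eps,j}\|_{L^r}^r \lesssim (\eps^\alpha r_j)^3$ and $\|\nabla \phi_{\eps,j}\|_{L^r}^r \lesssim (\eps^\alpha r_j)^{-r}(\eps^\alpha r_j)^3 = (\eps^\alpha r_j)^{3-r}$ combined with the disjointness of supports give
\begin{equation*}
 \|g_\eps - 1\|_{W^{1,r}(D)}^r \;\lesssim\; \sum_{z_j \in \Phi^\eps(D)} \bigl[(\eps^\alpha r_j)^3 + (\eps^\alpha r_j)^{3-r}\bigr]
 \;\lesssim\; \eps^{\alpha(3-r)} \sum_{z_j \in \Phi^\eps(D)} r_j^{3-r},
\end{equation*}
where in the last step we used that by Theorem~\ref{thm:MainProbThm}~\eqref{veta31jedna} one has $\eps^\alpha r_j \le \eps^{1+\kappa} \le 1$, so the $L^r$ contribution is dominated by the gradient contribution.

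The main obstacle is controlling the random sum $\sum_j r_j^{3-r}$. This is where Remark~\ref{rmk:SLLN} (the strong law of large numbers for the marked point process) enters: since $0 < 3-r < 3$ and $\E(r^3) < \infty$, Jensen's or a truncation argument gives $\E(r^{3-r}) \le 1 + \E(r^3) < \infty$, so \eqref{eq:ergodic} applied with $m = 3-r$ to the star-shaped set $D$ yields, almost surely,
\begin{equation*}
 \eps^3 \sum_{z_j \in \Phi^\eps(D)} r_j^{3-r} \;\longrightarrow\; \lambda \E(r^{3-r}) |D| \qquad \text{as } \eps \to 0.
\end{equation*}
In particular the sum is $O(\eps^{-3})$ for $\eps$ small, and combining with the previous display produces
\begin{equation*}
 \|g_\eps - 1\|_{W^{1,r}(D)}^r \;\lesssim\; \eps^{\alpha(3-r) - 3},
\end{equation*}
i.e.\ the announced bound with $\sigma = (\alpha(3-r)-3)/r > 0$. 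Convergence $g_\eps \to 1$ in $W^{1,r}(D)$ is then immediate. The only non-routine point is the passage from the qualitative bound $\eps^\alpha r_j \le \eps^{1+\kappa}$ (needed to guarantee disjoint supports and smallness of each bump) to the quantitative rate, which is precisely supplied by the ergodic statement of Remark~\ref{rmk:SLLN}.
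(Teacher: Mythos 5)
Your argument is correct and attains the advertised rate, but via a different construction than the paper's. The paper takes $g_\eps(x) := \min\bigl(\eps^{-\alpha}\dist(x,H_\eps),1\bigr)$ with $H_\eps$ the union of the holes; the transition layer around every hole then has thickness $\eps^{\alpha}$ and gradient bounded by $\eps^{-\alpha}$ \emph{uniformly in $r_j$}, and the $W^{1,r}$ bound follows simply by multiplying the measure of the support of $g_\eps-1$ (which is $\lesssim \eps^{3(\alpha-1)}$, from the SLLN with exponent $m=3$, i.e.\ precisely the given moment hypothesis $\E(r^3)<\infty$) by $\|g_\eps-1\|_{W^{1,\infty}}^r\lesssim\eps^{-\alpha r}$. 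You instead build $g_\eps$ from bumps whose transition layer scales with the hole radius $\eps^\alpha r_j$ and whose gradient is $(\eps^\alpha r_j)^{-1}$, so each hole contributes $(\eps^\alpha r_j)^{3-r}$ to $\|g_\eps-1\|_{W^{1,r}}^r$, and summing then invokes the SLLN with $m=3-r$ together with the extra (easy) check $\E(r^{3-r})\le 1+\E(r^3)<\infty$. Both routes yield $\|g_\eps-1\|_{W^{1,r}}^r\lesssim\eps^{\alpha(3-r)-3}$. Two minor points of comparison: your construction only gives $g_\eps=0$ on the holes once the enlarged balls $B_{2\eps^\alpha r_j}(\eps z_j)$ are pairwise disjoint, i.e.\ for $\eps\le\eps_0(\omega)$, while the paper's distance-based formula vanishes on $H_\eps$ unconditionally for every $\eps$; and the paper sidesteps the auxiliary moment $\E(r^{3-r})$ entirely by keeping the $r_j$-dependence in the measure of the support rather than in the slope of the cut-off.
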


\begin{proof}
By $M>3/(\alpha-2)$ and Theorem \ref{thm:MainProbThm}, all the balls $\{B_{2\e^\alpha r_j}(\e z_j)\}_{z_j\in\Phi^\e(D)}$ are disjoint. Thus, there exist functions $g_\e\in C^\infty(D)$ such that
\begin{align*}
&0\leq g_\e\leq 1,\quad g_\e = 0 \textrm{ in } \bigcup_{z_j \in \Phi^\eps(D)} B_{\eps^\alpha r_j}(\eps z_j),\quad g_\e = 1 \textrm{ in } D\setminus\bigcup_{z_j \in \Phi^\eps(D)} B_{2\eps^\alpha r_j}(\eps z_j),\\
&\|\nabla g_\e\|_{L^\infty(B_{2\e^\alpha r_j}(\e z_j))}\leq C (\e^\alpha r_j)^{-1} \textrm{ for all } z_j\in\Phi^\e(D),
\end{align*}
where the constant $C>0$ is independent of $\e$ and $r_j$. Moreover, since $M\geq 3$, \eqref{eq:ergodic} yields 
$\lim\limits_{\eps \to 0} \eps^3 \sum_{z_j \in \Phi^\e(D)} r_j^3 = C$, thus implying
\begin{equation}\nonumber
 \biggl| \bigcup_{z_j \in \Phi^\e(D)} B_{2\eps^\alpha r_j}(\eps z_j) \biggr| \le |B_2| \eps^{3\alpha} \sum_{z_j \in \Phi^\e(D)} r_j^3 \le C \eps^{3(\alpha-1)}
\end{equation}
for $\eps > 0$ small enough. This together with direct calculation yields that for any $1<r<3$,
\begin{align*}
\|1-g_\e\|_{L^r(D)}\leq C\e^\frac{3(\alpha-1)}{r},\quad \|\nabla g_\e\|_{L^r(D)}\leq C\e^{\frac{(3-r)\alpha-3}{r}},
\end{align*}
which finally leads to
\begin{align*}
\|1-g_\e\|_{W^{1,r}(D)}=\|1-g_\e\|_{L^r(D)}+\|\nabla g_\e\|_{L^r(D)}\leq C\e^\sigma.
\end{align*}
\end{proof}}

%We will just sketch the proof since we would repeat the arguments from \cite{FeireislLu} almost word by word. We give the main issues to apply their methods in our situation. As a matter of fact, we also can proof the homogenization for $\gamma>2$ as done in \cite{DieningFeireislLu} as well as the homogenization for the time dependent case and $\gamma>6$ as done in \cite{LuSchwarzacher}.

\subsection{The Navier-Stokes equations, weak solutions, and the convergence result}
For $\eps > 0$, in the domain $D_\eps$ as in~\eqref{def:Domain} we consider the stationary Navier-Stokes equations for compressible viscous fluids 
%In this section, let $\Phi=\{z_i\}_{i\in\N}$ be as introduced in Section \ref{sec:Introd}. The points $z_i$ are seen as the centres of very small spherical holes $B_{r_i \e^\alpha}(\e z_i)$, where $\e>0$ and $\alpha>3$. Further, we consider the stationary Navier-Stokes equations in the domain $D_\e$:
\begin{align}
%\partial_t \rho + 
\div(\rho_\e \vb u_\e)&=0\hspace{3.8em}&&\text{ in } D_\e\label{eq:conteq},\\
%\partial_t(\rho u) + 
\div(\rho_\e \vb u_\e\otimes \vb u_\e) + \nabla p(\rho_\e) &= \div\mathbb{S}(\nabla \vb u_\e) + \rho_\e \vb f+\vb g&&\text{ in } D_\e\label{eq:momentum},\\
\vb u_\e&=0\hspace{3.8em} &&\text{ on } \partial D_\e,\label{eq:bdryval}
\end{align}
where $\mathbb{S}$ denotes the Newtonian viscous stress tensor of the form
\begin{align*}
\mathbb{S}(\nabla \vb u)=\mu \bigg(\nabla \vb u+\nabla^T \vb u-\frac23 \div(\vb u)\mathbb{I}\bigg)+\eta\div(\vb u)\mathbb{I},\quad \mu > 0, \quad \eta \ge 0,
\end{align*}
$p(\rho) = a\rho^{\gamma}$ denotes the pressure with $a > 0$ and the adiabatic exponent $\gamma \ge 1$, and $\vb f$ and $\vb g$ are external forces satisfying $\|\vb f\|_{L^\infty(\R^3;\R^3)} + \|\vb g\|_{L^\infty(\R^3;\R^3)} \le C$. 
We also fix the total mass
\begin{align*}
\int_{D_\e} \rho_\e=\mass>0
\end{align*}
independently of $\e>0$. 

%\subsection{Weak solutions}
% \footnote{adjust to time-dep wk sol}
\begin{defin}\label{def:wksol}
We call a couple $[\rho,\vb u]$ a \emph{renormalized finite energy weak solution} to equations \eqref{eq:conteq}-\eqref{eq:bdryval} if:
\begin{gather*}
\rho\geq 0 \text{ a.e. in $D_\e$,}\quad \int_{D_\e}\rho=\mass,\quad \rho\in L^{2\gamma}(D_\e),\quad \vb u\in W^{1,2}_0(D_\e);\\
%\int_{D_\e} \rho \vb u\cdot\nabla\psi=0,\\
\int_{D_\e} p(\rho)\div \phi+\rho \vb u\otimes \vb u:\nabla\phi-\mathbb{S}(\nabla \vb u):\nabla\phi+(\rho \vb f+\vb g)\cdot\phi=0
\end{gather*}
for all all test functions $\phi\in C_c^\infty(D_\e;\R^3),$ the energy inequality
\begin{align}\label{eq:energyineq}
\int_{D_\e} \mathbb{S}(\nabla \vb u):\nabla \vb u\leq \int_{D_\e} (\rho \vb f+\vb g)\cdot \vb u,
\end{align}
holds, and the zero extension $[\tilde{\rho},\tilde{\vb u}]$ satisfies in $\mathcal{D}^\prime(\R^3)$
\begin{align*}
\div(\tilde{\rho}\tilde{\vb u})=0,\quad \div(b(\tilde{\rho})\tilde{\vb u})+(\tilde{\rho}b^\prime(\tilde{\rho})-b(\tilde{\rho}))\div \tilde{\vb u}=0
\end{align*}
for any $b\in C([0,\infty))\cap C^1((0,\infty))$ such that there are constants
\begin{align*}
c>0,\quad \lambda_0<1,\quad -1<\lambda_1\leq\gamma-1
\end{align*}
with
\begin{align*}
b^\prime(s)\leq cs^{-\lambda_0} \text{ for } s\in (0,1],\quad b^\prime(s)\leq cs^{\lambda_1} \text{ for } s\in[1,\infty).
\end{align*}
\end{defin}

As announced above we show how to apply our results \emph{only in the stationary case}. For that let us first formulate the actual result:
%For the assumptions made above, we have the following result:
\begin{theorem}\label{thm:NSstat}
{\color{red}Assume $\alpha>3$.} Let $D\subset \R^3$ be a bounded star-shaped domain w.r.t.~the origin with $C^2$-boundary and let $(\Phi,\mathcal{R})=(\{z_j\},\{r_j\})$ be a marked Poisson point process with intensity $\lambda > 0$, and $r_j \ge 0$ with {\color{red}$\mathbb{E}(r_j^M) < \infty$, $M=\max\{3,m\}$, $m>3/(\alpha-3)$.} Farther let
% Let $D\subset \R^3$ be a bounded star-shaped domain with $C^2$-boundary with $0 \in D$ and let
\begin{align*}
\mass>0,\; \gamma>3.
\end{align*}
Then for almost every $\omega \in \Omega$ there exists $\eps_0=\eps_0(\omega) > 0$, such that the following holds: For $0 < \e < 1$ let $D_\e$ be as in \eqref{def:Domain} and let $[\rho_\e,\vb u_\e]$ be a family of renormalized finite energy weak solutions to \eqref{eq:conteq}-\eqref{eq:bdryval}. 
Then there is a constant $C>0$, which is independent of $\e$, such that 
\begin{align*}
\sup_{\e\in (0,\eps_0)} \|\tilde{\rho}_\e\|_{L^{2\gamma}(D)}+\|\tilde{\vb u}_\e\|_{W_0^{1,2}(D)} \leq C
\end{align*}
and, up to a subsequence, 
\begin{align*}
\tilde{\rho}_\e\weak \rho \text{ in } L^{2\gamma}(D),\quad \tilde{\vb u}_\e\weak \vb u \text{ in } W_0^{1,2}(D),
\end{align*}
where the limit $[\rho,\vb u]$ is a renormalized finite energy weak solution to the problem \eqref{eq:conteq}-\eqref{eq:bdryval} in the limit domain $D$.
\end{theorem}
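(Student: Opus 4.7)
The plan is to follow the strategy of Feireisl--Lu~\cite{FeireislLu} for the periodic setting, with Theorem~\ref{thm:MainBog} and Lemma~\ref{lm:cutoff} taking the place of the periodic cell constructions; the structure of the random perforation enters only through these two inputs.

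\emph{Step 1 (Uniform estimates).} The energy inequality~\eqref{eq:energyineq} combined with Korn and Poincar\'e inequalities for the zero extensions $\tilde{\vb u}_\eps \in W^{1,2}_0(D)$ (so the constants depend only on $D$, not on $\eps$) yields
\begin{equation*}
\|\tilde{\vb u}_\eps\|_{W^{1,2}_0(D)} \lesssim 1 + \|\rho_\eps\|_{L^{6/5}(D_\eps)}.
\end{equation*}
To upgrade this into a true bound I would test the momentum equation~\eqref{eq:momentum} with the Bogovski\u{\i} corrector $\phi_\eps := \B_\eps\bigl(\rho_\eps^\gamma - \langle \rho_\eps^\gamma\rangle_{D_\eps}\bigr)$ from Theorem~\ref{thm:MainBog}. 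The crucial point is that for $q=2$ the $\eps$-dependent prefactor $1+\eps^{2/q-1}$ equals $2$, so $\|\phi_\eps\|_{W^{1,2}_0(D_\eps)} \lesssim \|\rho_\eps^\gamma\|_{L^2(D_\eps)}$ uniformly in $\eps$. The standard computation (identical to~\cite[Section~4]{FeireislLu}) then produces $\int_{D_\eps} p(\rho_\eps)\rho_\eps^\gamma \lesssim 1$, hence the uniform bounds $\|\tilde\rho_\eps\|_{L^{2\gamma}(D)} + \|\tilde{\vb u}_\eps\|_{W^{1,2}_0(D)} \le C$.

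\emph{Step 2 (Passage to the limit off the pressure).} Extracting a weakly convergent subsequence, $(\tilde\rho_\eps,\tilde{\vb u}_\eps) \weak (\rho,\vb u)$ in $L^{2\gamma}(D)\times W^{1,2}_0(D)$ and $p(\tilde\rho_\eps) \weak \overline{p(\rho)}$ in $L^{2}(D)$. To test the momentum equation against an arbitrary $\varphi \in C_c^\infty(D;\R^3)$ in the limit, I plug $g_\eps \varphi$ into the weak form on $D_\eps$, where $g_\eps$ is the cut-off from Lemma~\ref{lm:cutoff} applied with some $r \in (1,3)$ satisfying $(3-r)\alpha > 3$ (possible precisely because $\alpha > 3$). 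Since $g_\eps$ vanishes on the holes, $g_\eps\varphi$ is admissible; moreover $g_\eps \to 1$ in $W^{1,r}(D)$, and by the usual Lions argument (using $\gamma > 3$ together with the continuity equation) the convective term $\rho_\eps \vb u_\eps \otimes \vb u_\eps$ converges distributionally to $\rho\vb u\otimes\vb u$. Passing to the limit then produces the weak form of~\eqref{eq:momentum} on $D$, but with $p(\rho)$ replaced by $\overline{p(\rho)}$; the continuity equation~\eqref{eq:conteq} is handled analogously with scalar test functions $g_\eps\psi$, and DiPerna--Lions theory renormalizes it on all of $\R^3$.

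\emph{Step 3 (Strong compactness of density -- the main obstacle).} It remains to show $\overline{p(\rho)}=p(\rho)$, which then forces $\rho_\eps\to\rho$ a.e.\ in $D$. Here I would invoke the Lions--Feireisl effective viscous flux identity
\begin{equation*}
\overline{p(\rho)\,T_k(\rho)} - \overline{p(\rho)}\;\overline{T_k(\rho)} = \bigl(\tfrac{4}{3}\mu+\eta\bigr)\bigl(\overline{T_k(\rho)\,\div\vb u} - \overline{T_k(\rho)}\,\div\vb u\bigr),
\end{equation*}
obtained by testing~\eqref{eq:momentum} against $g_\eps\,\nabla\Delta^{-1}[T_k(\tilde\rho_\eps)]$ for the usual truncations $T_k$ and passing to the limit via the div--curl lemma. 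The presence of $g_\eps$ is exactly what makes this test function admissible in $D_\eps$, and commutes harmlessly with $\nabla\Delta^{-1}$ in the limit thanks to the strong convergence $g_\eps\to 1$ in $W^{1,r}(D)$. Combining this identity with the renormalized continuity equation on $\R^3$ and the strict monotonicity of $s\mapsto a s^\gamma$ yields $\rho_\eps\to\rho$ in $L^1(D)$, so $\overline{p(\rho)}=p(\rho)$. This is the delicate part of the argument, but once Theorem~\ref{thm:MainBog} and Lemma~\ref{lm:cutoff} are available it is identical to~\cite{FeireislLu}.
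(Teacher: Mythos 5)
Your proposal follows the same strategy as the paper's proof: uniform estimates via the energy inequality and the Bogovski\u{\i} test function $\B_\eps\bigl(\rho_\eps^\gamma - \langle\rho_\eps^\gamma\rangle_{D_\eps}\bigr)$ from Theorem~\ref{thm:MainBog} (noting the $q=2$ prefactor is uniform), passage to the limit by testing with $g_\eps\varphi$ using the cut-off from Lemma~\ref{lm:cutoff}, and strong convergence of the density via the effective viscous flux identity as in~\cite{FeireislLu}. This is exactly the route the paper takes, and the paper likewise defers the effective-flux details to~\cite[Section 2.4.2]{FeireislLu}.
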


\begin{rem}
{\color{red}The condition~$M>3/(\alpha-3)$ on the size of radii of the perforations is not just needed for technical purposes, but it is in a sense an optimal assumption. Indeed, one can show that in the case $m=3/(\alpha-3)$, for almost every realization of points and radii there is a sequence of $\eps \to 0$ such that for each such $\eps$ the rescaled radius $\eps^\alpha r_j$ of the largest ball in $D$ is of size $\eps^3$, i.e., $r_j \sim \eps^{3-\alpha}$. While \emph{one} large ball of size $\eps^3$ does not necessarily mean that the system should behave as in the critical case (which is expected to lead to a law of Brinkman type), nevertheless the presence of such large ball might change some of the properties of the system. Moreover, in the case $m < 3/(\alpha-3)$, the size of the largest ball would scale like $\eps^\nu$ with $\nu<3$, and there might be many balls of size at least $\eps^3$. 
}%
\end{rem}

% We find that the random position and radii of the holes have no influence to the convergence. This was known for the periodic and constant radii case, see, for instance, \cite{DieningFeireislLu}, and in the incompressible setting for random perforation also by \cite{GiuntiHofer}, \cite{Hillairet}. Schwarzacher and Lu gave in \cite{LuSchwarzacher} a convergence result for the compressible and random perforated case, but they still need that the holes are at least of order $\e$ far away from each other. With the results stated in Section \ref{sec:Model} and proven in Section \ref{sec:BogOp}, we are able to remove this assumption in a certain way.

% \subsection{Uniform bounds}
% {\Large from here, all is the same as in \cite{FeireislLu}; Sec. \ref{sec:Prob} are probabilistic results}\\

\begin{proof}[Sketch of the proof of Theorem \ref{thm:NSstat}]
We want to give uniform bounds for the velocity $\vb u_\e$ and the density $\rho_\e$ arising in the Navier-Stokes equations \eqref{eq:conteq}-\eqref{eq:bdryval}. First, by the energy inequality \eqref{eq:energyineq}, Korn's and Hölder's inequality, we have
\begin{align*}
\|\nabla \vb u_\e\|_{L^2(D_\e)}\lesssim \|\vb f\|_{L^\infty(D_\e)}\|\rho_\e\|_{L^\frac65(D_\e)}\|\vb u_\e\|_{L^6(D_\e)}+\|\vb g\|_{L^\infty(D_\e)} \|\vb u_\e\|_{L^6(D_\e)}.
\end{align*}
Since $\vb u_\e\in W_0^{1,2}(D_\e)$, we use Poincar\'e's inequality and Sobolev embedding to obtain $\|\vb u_\e\|_{L^6(D_\e)}\lesssim \|\nabla \vb u_\e\|_{L^2(D_\e)}$, which combined with the previous display yields
\begin{equation}
\begin{aligned}\label{eq:estimatevel}
\|\nabla \vb u_\e\|_{L^2(D_\e)}+\|\vb u_\e\|_{L^6(D_\e)} &\lesssim \|\vb f\|_{L^\infty(D_\e)}\|\rho_\e\|_{L^\frac65(D_\e)}+\|\vb g\|_{L^\infty(D_\e)}
\\
&\lesssim \|\rho_\e\|_{L^\frac65 (D_\e)}+1.
\end{aligned}
\end{equation}

We define a test function
\begin{align*}
\phi:=\B_\e\bigl(\rho_\e^\gamma-\langle \rho_\e^\gamma\rangle_{D_\e}\bigr),
\end{align*}
where $\langle \rho_\e^\gamma\rangle_{D_\e}:=|D_\e|^{-1}\int_{D_\e} \rho_\e^\gamma$ is the mean value of $\rho_\e^\gamma$ over the domain $D_\e$ and $\B_\e$ is the Bogovski\u{\i} operator constructed in Theorem \ref{thm:MainBog}. We remark that $\phi$ is well-defined due to the fact $\rho_\e^\gamma\in L^2(D_\e)$. By the properties of $\B_\e$, we obtain $\div \phi=\rho_\e^\gamma-\langle\rho_\e^\gamma\rangle_{D_\e}$ in $D_\e$ and 
\begin{align*}
\|\phi\|_{W_0^{1,2}(D_\e)}\lesssim \|\rho_\e^\gamma\|_{L^2(D_\e)}+\|\rho_\e^\gamma\|_{L^1(D_\e)}\lesssim \|\rho_\e\|_{L^{2\gamma}(D_\e)}^\gamma.
\end{align*}
Testing \eqref{eq:momentum} with $\phi$ yields
\begin{align*}
\int_{D_\e} p(\rho_\e)\rho_\e^\gamma =\sum_{j=1}^4 I_j,
\end{align*}
where the integrals $I_j$ are defined as
\begin{align*}
&I_1:=\int_{D_\e} p(\rho_\e)\langle\rho_\e^\gamma\rangle_{D_\e},\quad &&I_2:=\int_{D_\e}\mu\nabla \vb u:\nabla\phi+\bigg(\frac\mu3+\eta\bigg)\div \vb u_\e \div \phi,\\
&I_3:=-\int_{D_\e} \rho_\e \vb u_\e\otimes \vb u_\e :\nabla\phi,\quad &&I_4:=-\int_{D_\e} (\rho_\e \vb f+\vb g)\cdot \phi.
\end{align*}
By interpolation of Lebesgue spaces, we estimate $I_1$ by
\begin{align*}
|I_1|\leq \frac{1}{|D_\e|} \|\rho_\e\|_{L^\gamma(D_\e)}^{2\gamma}\leq \frac{1}{|D_\e|}\bigg(\|\rho_\e\|_{L^1(D_\e)}^{\theta_1}\|\rho_\e\|_{L^{2\gamma}(D_\e)}^{1-\theta_1}\bigg)^{2\gamma}\lesssim \|\rho_\e\|_{L^{2\gamma}(D_\e)}^{2\gamma(1-\theta_1)},
\end{align*}
where $\theta_1\in(0,1)$ is determined by
\begin{align*}
\frac1\gamma=\frac{\theta_1}{1}+\frac{1-\theta_1}{2\gamma}.
\end{align*}

The estimates for the remaining integrals are the same as in \cite{FeireislLu}, so we refer to~\cite[page 386]{FeireislLu} for details. Finally, we obtain
% \begin{align*}
$\|\rho_\e \|_{L^{2\gamma}(D_\e)}^{2\gamma}\lesssim \|\rho_\e\|_{L^{2\gamma}(D_\e)}^{2\gamma(1-\beta)}$
% \end{align*}
for some $\beta>0$, which yields
%\begin{align*}
$
\|\rho_\e\|_{L^{2\gamma}(D_\e)}\leq C
$.
% \end{align*}
In view of \eqref{eq:estimatevel}, we also have
% \begin{align*}
$\|\vb u_\e\|_{W_0^{1,2}(D_\e)}\leq C$, 
% \end{align*}
where the constant $C>0$ does not depend on $\e$. This completes the proof for the uniform bounds.

%\subsection{Equations in fixed domain}
In the following we want to identify the limiting equations. First, using the fact that $[\rho_\e, \vb u_\e]$ is a renormalized weak solution in $D_\eps$ we get that the zero extensions of $\rho_\e$ and $\vb u_\e$ solve
%In this section, we present the equations in the fixed domain $D$. We start with the continuity equation:
% Let the assumptions of Theorem \ref{thm:main} hold. Then the zero prolongation of the velocity and density satisfy
\begin{align*}
\div(\tilde{\rho}_\e\tilde{\vb u}_\e)=0,\quad \div(b(\tilde{\rho}_\e)\tilde{\vb u}_\e)+(\tilde{\rho}_\e b^\prime(\tilde{\rho}_\e)-b(\tilde{\rho}_\e))\div \tilde{\vb u}_\e=0 \text{ in } \mathcal{D}^\prime,
\end{align*}
where $b\in C([0,\infty))\cap C^1((0,\infty))$ is as in Definition \ref{def:wksol}. 

\medskip
Considering the momentum equation in the whole domain, we get an error $F_\eps$ on the right-hand side of the equation. Since the balls are tiny ($\alpha > 3$), this friction term is in the limit negligible. 
% Next, we consider the momentum equation. Here we show:
% \begin{theorem}\label{thm:extendedMomentEqu}
% Let the assumptions of theorem \ref{thm:main} hold. 
More precisely, the zero prolongations of the density and velocity satisfy
\begin{equation}\label{eq:cont}
\nabla p(\tilde{\rho}_\e)+\div(\tilde{\rho}_\e \tilde{\vb u}_\e\otimes \tilde{\vb u}_\e)-\div\mathbb{S}(\nabla \tilde{\vb u}_\e)=\tilde{\rho}_\e \vb f+\vb g+F_\e,
\end{equation}
where $F_\e$ is a distribution satisfying for all $\phi\in C_c^\infty(D)$
\begin{align*}
|\langle F_\e,\phi\rangle_{\mathcal{D}^\prime, \mathcal{D}}|\lesssim \e^\sigma \|\phi\|_{L^r(D)}+\e^\frac{3(\alpha-1)\sigma_0}{2(2+\sigma_0)}\|\nabla\phi\|_{L^{2+\sigma_0}(D)}
\end{align*}
with
\begin{align*}
\sigma:=\frac{\alpha-3}{4},\quad r:=\frac{12(\alpha-1)}{\alpha-3},\quad \sigma_0\in (0,\infty).
\end{align*}

To show this, we will use the cut-off function $g_\eps$ from Lemma~\ref{lm:cutoff}. For any test function $\phi\in C_c^\infty(D)$, we test the momentum equation in $D_\eps$ with $\phi g_\eps$ to get:
\begin{align*}
&\int_D \tilde{\rho}_\e\tilde{\vb u}_\e\otimes\tilde{\vb u}_\e : \nabla\phi+p(\tilde{\rho}_\e)
\div\phi-\mathbb{S}(\nabla\tilde{\vb u}_\e)
:\nabla\phi+(\tilde{\rho}_\e \vb f+\vb g)\cdot\phi\, dx\\
=&I_\e+\int_D \tilde{\rho}_\e\tilde{\vb u}_\e\otimes\tilde{\vb u}_\e : \nabla(g_\e\phi)+p(\tilde{\rho}_\e)
\div(g_\e\phi)-\mathbb{S}(\nabla\tilde{\vb u}_\e)
:\nabla(g_\e\phi)
\\
&\qquad\quad+(\tilde{\rho}_\e \vb f+\vb g)\cdot(g_\e\phi)\, dx\\
=&I_\e,
\end{align*}
%\begin{comment}
where we used that $g_\e\phi\in C_c^\infty(D)$ is an appropriate test function, and the term $I_\e$ is given by
\begin{align*}
I_\e:=\sum_{j=1}^4 I_{\e, j}:= &\int_D \tilde{\rho}_\e\tilde{\vb u}_\e\otimes\tilde{\vb u}_\e : (1-g_\e)\nabla\phi-\tilde{\rho}_\e\tilde{\vb u}_\e\otimes\tilde{\vb u}_\e : (\nabla g_\e\otimes\phi)\, dx\\
&+\int_D p(\tilde{\rho}_\e) (1-g_\e)\div\phi-p(\tilde{\rho}_\e)\nabla g_\e\cdot \phi\, dx\\
&+\int_D -\mathbb{S}(\nabla\tilde{\vb u}_\e):(1-g_\e)\nabla\phi+\mathbb{S}(\nabla \tilde{\vb u}
_\e):(\nabla g_\e\otimes\phi)\, dx\\
&+\int_D (\tilde{\rho}_\e \vb f+\vb g)\cdot (1-g_\e)\phi\, dx.
\end{align*}
Using the bounds on the cut-off function $g_\eps$, we combine the previous estimates on the density and velocity to prove~\eqref{eq:cont} (for details see~\cite[Proof of Proposition 2.2]{FeireislLu}).

% 
% Estimating each $I_{\e, j}$ separately, we get for $I_{\e, 1}$, using $\gamma>3$,
% \begin{align*}
% I_{\e,1}&\lesssim ||\rho_\e||_{L^{2\gamma}(D)} ||u_\e||_{L^6(D)}^2 \bigg(||(1-g_\e)\nabla\phi||_{L^2(D)}+||\nabla g_\e\otimes\phi||_{L^2(D)}\bigg)\\
% &\lesssim ||1-g_\e||_{L^\frac{2(2+\sigma_0)}{\sigma_0}(D)} ||\nabla\phi||_{L^{2+\sigma_0}(D)}+||\nabla g_\e||_{L^{r_1}(D)} ||\phi||_{L^{r_2}(D)},
% \end{align*}
% where
% %\begin{align*}
% $\sigma_0\in (0,\infty)$, $r_j\in (2,\infty)$, and $\frac{1}{r_1}+\frac{1}{r_2}=\frac12$. 
% % \end{align*}
% Using the estimates for $g_\e$, we infer
% \begin{align*}
% I_{\e, 1}\lesssim \e^\frac{3(\alpha-1)\sigma_0}{2(2+\sigma_0)}||\nabla\phi||_{L^{2+\sigma_0}(D)}+\e^{\frac{3(\alpha-1)}{r_1}-\alpha}||\phi||_{L^{r_2}(D)}.
% \end{align*}
% 
% Choosing
% \begin{align*}
% r_2:=\frac{12(\alpha-1)}{\alpha-3},
% \end{align*}
% 
% we get
% \begin{align*}
% \sigma:=\frac{3(\alpha-1)}{r_1}-\alpha=\frac{3(\alpha-1)}{2}-\alpha-\frac{3(\alpha-1)}{r_2}=\frac{\alpha-3}{2}-\frac{3(\alpha-1)}{r_2}=\frac{\alpha-3}{4}>0.
% \end{align*}
% Seeing that estimates for $I_{\e, 2}, I_{\e, 3}$ and $I_{\e, 4}$ follow in the same manner, we finish the proof of Theorem \ref{thm:extendedMomentEqu}.
%\end{comment}
% \end{proof}

% \subsection{The limiting equations}
% \subsubsection{Strong convergence of the velocity}

\medskip
By the uniform estimates on $\rho_\e$ and $\vb u_\e$, we get a subsequence (not relabeled) such that
\begin{align*}
\tilde{\rho}_\e\weak \rho \text{ in } L^{2\gamma}(D),\quad \tilde{\vb u}_\e\weak \vb u \text{ in } W_0^{1,2}(D).
\end{align*}
By compact Sobolev embedding, this yields
\begin{align*}
&\tilde{\vb u}_\e\to \vb u \text{ strongly in $L^q(D)$ for all } 1\leq q<6,\\
&\tilde{\rho}_\e \tilde{\vb u}_\e\weak \rho \vb u \text{ weakly in $L^q(D)$ for any } 1< q < \frac{6\gamma}{\gamma+3},\\
&\tilde{\rho}_\e\tilde{\vb u}_\e\otimes\tilde{\vb u}_\e\weak \rho \vb u \otimes \vb u \text{ weakly in $L^q(D)$ for all } 1<q<\frac{6\gamma}{2\gamma+3}.
\end{align*}
Letting $\e\to 0$ in equations \eqref{eq:conteq} and \eqref{eq:momentum}, we get the following equations in $\mathcal{D}^\prime(D)$:
\begin{align*}
\div(\rho \vb u)&=0,\\
\div(\rho \vb u\otimes \vb u)+\overline{p(\rho)}&=\div\mathbb{S}(\nabla \vb u)+\rho \vb f+\vb g,
\end{align*}
where $\overline{p(\rho)}$ is the weak limit of $p(\tilde{\rho}_\e)$ in $L^2(D)$. Moreover, the couple $[\rho,u]$ satisfies the renormalized equations. % stated in Remark \ref{rem:renormalized}. 
To finish the proof of Theorem \ref{thm:main}, we have to prove $\overline{p(\rho)}=p(\rho)$, arguing as in \cite[Section 2.4.2]{FeireislLu}.
\end{proof}

\noindent
{\bf Acknowledgement.} The authors were partially supported by the German Science Foundation DFG in context of
the Emmy Noether Junior Research Group BE 5922/1-1.  
  
%Einbinden der Bibliographie
\phantomsection %für korrekte Seitenzahl der Bib
%\addcontentsline{toc}{section}{Literatur}
%Name der Bib; siehe Lit.bib
%\bibliographystyle{spbasic}      % basic style, author-year citations
%\bibliographystyle{spmpsci}      % mathematics and physical sciences
%\bibliographystyle{spphys}       % APS-like style for physics
%\bibliography{../Lit}   % name your BibTeX data base

 \bibliographystyle{amsplain}
 \bibliography{Lit}

\end{document}